\theoremstyle{plain} \numberwithin{equation}{subsection}
\newtheorem{theorem}{Theorem}[section]
\newtheorem{lemma}[theorem]{Lemma}
\newtheorem{proposition}[theorem]{Proposition}
\newtheorem{proposition/definition}[theorem]{Proposition/Definition}
\theoremstyle{definition}
\newtheorem{remark}[theorem]{Remark}
\newtheorem{remarks}[theorem]{Remarks}
\newtheorem{definition}[theorem]{Definition}
\def\mf#1{{\mathfrak{#1}}} 
\def\mc#1{{   \mathcal{#1}   }}
\def\mb#1{{   \mathbb{#1}   }}
\def\LL{{ \mathcal L }}
\def\tb#1{{\textbf{#1}}}
\def\tr#1{{\textrm{#1}}}
\def\Z{{ \mb Z }}
\def\F{{ \mb F }}
\def\P{{ \mb P }}
\def\L{{ \mc L }}
\def\g{{ \mf g }}
\def\b{{ \mf b }}
\def\n{{ \mf n }}
\def\h{{ \mf h }}
\def\cohom#1#2#3{{H^{#1}  ( #2, \, #3 )}} 
\def\Cohom#1#2#3{{H^{#1} \big( #2, \: #3 \big)}} 
\def\Hom{{  \textrm{Hom}  }}
\def\struct#1{{\mathcal{O}_{#1}}} 
\def\isom{{\ \tilde{\longrightarrow} \ }} 
\def\strdiv#1#2{{  \mc{O}_{ #1 }(  #2  )  }}
\def\fr{{  \tr{Fr}  }}
\def\frp{{  \tr{Fr}'  }}
\def\frpm{{ \tr{Fr}'^- }}
\def\frpt{{ \tr{Fr}'_0 }}
\def\frstar{{  \tr{Fr}^*  }}
\def\barg{{  \bar{\tr U}( \mf{g} )  }}
\def\barb{{  \bar{\tr U}( \mf{b} )  }}
\def\barbm{{  \bar{\tr U}( \mf{b}^- )  }}
\def\barn{{  \bar{\tr U}( \mf{n} )  }}
\def\barnm{{  \bar{\tr U}( \mf{n}^- )  }}
\def\bart{{  \bar{\tr U}^0  }}
\def\bargk{{  \bar{U}_k( \mf{g} )  }}
\def\barbk{{  \bar{U}_k( \mf{b} )  }}
\def\barbmk{{  \bar{U}_k( \mf{b}^- )  }}
\def\barnk{{  \bar{U}_k( \mf{n} )  }}
\def\barnmk{{  \bar{U}_k( \mf{n}^- )  }}
\def\bartk{{  \bar{U}_k^0  }}
\def\barndgk#1{{ \bar{U}_{#1}(\n)^\vee_k }}
\def\fing{{  F_{ \mf g }  }}
\def\finh{{  F_\mf{h}  }}
\def\indhugly#1{{  \finh \, \Hom_{\barb}( \barg, #1 )  }} 
\def\indh#1{{  H^0_{\mf h}( \bar X, #1 )  }} 
\def\Indh#1{{  H^0_{\mf h} \big( \bar X, #1 \big)  }}
\def\indg#1{{  H^0( \bar X, #1 )  }} 
\def\Indg#1{{  H^0 \big( \bar X, #1 \big)  }}
\def\symnd{{ S( \mf n^* ) }}
\def\P{{ \mb P }}
\def\d#1{{ {#1}^\vee }} 
\def\barnmd{{  \d{ \barnm }  }}
\def\barnd{{ \d \barn }}
\def\bargd{{ \d \barg }}
\def\ho#1{{ H^0( #1 ) }}
\def\nm{{\n^-}}
\def\fp{{ \mb F_p }}
\def\frtstark{{ \tilde \fr^*_k }}
\def\nk{{\n_k}}
\def\bargk{{ \bar U_k(\g) }}
\def\barbk{{ \bar U_k(\b) }}
\def\barbmk{{ \bar U_k(\b^-) }}
\def\barnk{{\bar U_k( \n )}}
\def\barnmk{{\bar U_k( \nm )}}
\def\bartk{{\bar U^0_k}}
\def\Gk{{ G_k }}
\def\Bk{{ B_k }}
\def\flag{{ G_k/B_k }}
\def\T{{ \mc T }}
\newcommand\id{{ \tr{Id} }}
\newcommand\spl{{ S }} 
\newcommand\splm{{ \spl_- }}
\newcommand\splp{{ \spl_+ }}
\newcommand\splv{{ \spl^\vee }}
\newcommand\barndg[1]{{ \bar {\tr U}_{#1}( \mf n )^\vee }} 
\def\indgk#1{{  H^0_k( \bar X, #1 )  }} 
\def\Indgk#1{{  H^0_k \big( \bar X, #1 \big)  }}
\def\dpow#1#2{{ #1^{(#2)} }}
\newcommand\sw[2]{{#1_{(#2)}}} 
\newcommand\frt{{ \widetilde \fr }}
\newcommand\chik[1]{{ \chi^k_{#1} }}
\def\frtstar{{  \frt^*  }}
\newcommand\barng[1]{{ \bar {\tr U}_{#1}(\n) }}
\newcommand\ih[2]{{ \Indh{ \barndg{#1} \otimes \chi_{-#1 #2} } }}
\newcommand\ihp[2]{{ \Indh{ \barndg{#1} \otimes \chi_{-(#1) #2} } }} 
\newcommand\ihe[2]{{ \Indh{ \barndg{ (p-1)N + p#1} \otimes \chi_{-p#1 #2} } }}
\newcommand\ig[2]{{ \Indg{ \barndg{#1} \otimes \chi_{-#1 #2} } }}
\newcommand\ige[2]{{ \Indg{ \barndg{ (p-1)N + p#1 } \otimes \chi_{-p#1 #2} } }}
\newcommand\co{{\mc T^*}}
\newcommand\pco{{ \P(\co) }}
\newcommand\smalln{{ \bar{ \tr u }(\n) }}
\newcommand\smallnm{{ \bar{ \tr u }(\nm) }}
\newcommand\smallg{{ \bar{ \tr u }(\mf g) }}
\newcommand\smallt{{ \bar{ \tr u }^0 }}
\def\E#1#2{{ E_{#1}^{(#2)} }}
\def\EE#1#2{{ E_{#1}^{(#2)} }}
\def\FF#1#2{{ F_{#1}^{(#2)} }}
\def\R#1{{ R_{#1} }}
\def\Rh#1{{ R^\h_{#1} }}
\def\Rhz{{R^\h}} 
\def\isom{{ \, \stackrel \sim \to \, }}
\def\im{{ \tr{im} }}
\def\st{{ \tr{St} }}
\def\stst{{ \st \otimes \st }}
\def\stk{{ \st_k }}
\def\ststk{{ \st_k \otimes \st_k }}
\def\mul{{M}}
\def\mulf{{ \mul_{f_+ \otimes f_-} }}
\def\tot{{ \widetilde \spl }} 
\def\totk{{ \tot_k }}
\def\bpsi{{ \bar \psi }}
\def\hpsi{{ \hat \psi }}
\def\barne{{ \barng{(p-1)N} }}
\def\barnde{{ \barndg{(p-1)N} }}
\def\trace{{ \tr{Tr} }}
\def\trm{{\trace_-}}
\def\trp{{\trace_+}}
\def\cofp{{ \mc T^*_{\fp} }}
\def\kltspl{{ \Psi }}
\begin{document}

\title{Algebraic Frobenius Splitting of Cotangent Bundles of Flag Varieties}
\author{Chuck Hague}
\email{hague@math.udel.edu}
\address{Department of Mathematical Sciences \\
University of Delaware \\
501 Ewing Hall \\
Newark, DE 19716}
\begin{abstract} Following the program of algebraic Frobenius splitting begun by Kumar and Littelmann, we use representation-theoretic techniques to construct a Frobenius splitting of the cotangent bundle of the flag variety of a semisimple algebraic group over an algebraically closed field of positive characteristic. We also show that this splitting is the same as one of the splittings constructed by Kumar, Lauritzen, and Thomsen.
\keywords{Algebraic Frobenius splitting \and Flag variety \and Cotangent bundle}
\end{abstract}

\maketitle

\setcounter{tocdepth}{2}
\tableofcontents

\section{Introduction}

\subsection{Background}

Let $G_k$ be a semisimple, simply-connected algebraic group over an algebraically closed field $k$ of positive characteristic $p$ and let $B_k \subseteq G_k$ be a Borel subgroup. We assume that $p$ is a good prime for $G$ (cf Definition \ref{def:good prime}).  One of the fundamental results of the theory of Frobenius splitting (\cite{MR85}) is that the flag variety $G_k/B_k$ is Frobenius split. In the papers \cite{KL00} and \cite{KL02}, Kumar and Littelmann use the quantum Frobenius morphism and a variant of its splitting, both due to Lusztig \cite{L90}, to construct an alternate proof of the splitting of $G_k/B_k$ using purely representation-theoretic constructions; they call this an algebraization of Frobenius splitting.

More precisely, Kumar and Littelmann construct morphisms between induced representations for hyperalgebra and quantum group representations. Upon base change, these morphisms can be identified with morphisms on the structure sheaf $\struct C$ of an affine cone $C$ over $G_k/B_k$. In particular, the quantum Frobenius morphism induces the $p^{th}$ power on $\struct C$ and the quantum splitting morphism induces a splitting of the $p^{th}$ power morphism on $\struct C$. This implies that $C$ is Frobenius split and hence by a process of sheafification that $G_k/B_k$ is Frobenius split as well.

Gros and Kaneda \cite{GK10} then showed the argument of Kumar-Littelmann can be simplified; in particular, one does not have to go to the level of quantum groups. Instead, all of the constructions of \cite{KL00} and \cite{KL02} can be done purely on the level of hyperalgebras. In particular, they construct a morphism $\varphi$ which is the hyperalgebra version of the quantum splitting morphism. In this paper, we use the constructions in \cite{GK10} to continue the Kumar-Littelmann program of algebraic Frobenius splitting and give a purely representation-theoretic proof that the cotangent bundle $\co$ of $G_k/B_k$ is Frobenius split, a fact which was first proved by geometric means in \cite{KLT}.

One main advantage of using algebraic Frobenius splitting techniques is that one can concretely write down the splitting. In particular, the hope is that using the algebraic method will make it easier to check that certain subvarieties are compatibly split.

\subsection{Algebraic Frobenius splitting}

Let $X$ be a projective $k$-variety and let $\L$ be an ample line bundle on $X$. Set
\begin{equation}
R_\L := \bigoplus_{n \geq 0} \cohom 0 X {\L^n} \,,
\end{equation}
the affine cone over $X$ corresponding to $\L$. The main fact in algebraic Frobenius splitting (Lemma 1.1.14 in \cite{BK}) is that $X$ is Frobenius split if and only if $\tr{Spec}( R_\L )$ is. In turn, $\tr{Spec}( R_\L )$ is Frobenius split if and only if $R_\L$ is a Frobenius split $k$-algebra: i.e., there exists an $\fp$-linear endomorphism $s$ of $R_\L$ such that (1) $s(f^p g) = f \cdot s(g)$ for all $f, g \in R_\L$ (this is called \tb{Frobenius-linearity} of $s$) and (2) $s(f^p) = f$ for all $f \in R_\L$.

We now apply these ideas to the case $X = \pco$, the projectivization of the cotangent bundle $\co$. Let $U_k \subseteq B_k$ be the unipotent radical of $B_k$ and let $U^-_k$ be the opposite unipotent radical. Let $pr : \co \to G_k/B_k$ be the projection and set $F_k := pr^{-1}( U^-_k B_k ) \subseteq \co$, the fiber over the big cell $ U^-_k B_k \subseteq G_k / B_k $. Then $F_k$ is an affine subvariety of $\co$ isomorphic to $ U^-_k \times U_k $.

Let $G$ be a split form of $G_k$ over $\fp$. We first construct, for any weight $\lambda$ of $G$, a polynomial ring $\Rh \lambda$ over $\fp$ such that $ \Rh \lambda \otimes_\fp k \cong k[F_k] $. This ring carries an action of the hyperalgebra of $G$; taking the locally finite part gives a ring $\R \lambda$. When $\lambda$ is a regular dominant weight, $\R \lambda \otimes_\fp k$ is isomorphic to $R_\L$ for a very ample bundle $\L$ on $\pco$. Further, upon base change to $k$ the natural inclusion $\R \lambda \hookrightarrow \Rh \lambda$ corresponds to the inclusion $ R_\L \hookrightarrow k[F_k] $.

Now, since $\pco$ is split if and only if $\co$ is, it suffices to construct a splitting of the $k$-algebra $R_\L$. To this end, we first work over $\fp$ and construct a splitting $\tot$ of $\Rh \lambda$ that restricts to a splitting of the subalgebra $\R \lambda$. Upon base change, this induces a splitting of $R_\L$. Geometrically, this corresponds to a splitting of the ring $ k[F_k] $ (or, equivalently, a splitting of the affine scheme $F_k$) that restricts to a splitting of the subring $R_\L$.

\subsection{Details}
We now give more details on the construction of the rings $\Rh \lambda$ and $\R \lambda$ and the splitting morphism $\tot$. As above let $G$ be a split form of the group $G_k$ over $\fp$ and let $T \subseteq G$ be a split maximal torus. Let $B \subseteq G$ be a Borel subgroup of $G$ containing $T$. Let $B^-$ denote the opposite Borel subgroup. Let $U \subseteq B$ and $U^- \subseteq B^-$ be the respective unipotent radicals. We consider the root spaces of $B$ to correspond to the positive roots. Let $\Lambda$ denote the weight lattice of $T$.

Let $\barn$ denote the hyperalgebra of $U$. The torus-locally finite part $\barnd$ of the full linear dual of $\barn$ is naturally isomorphic to $\fp[U]$, the coordinate ring of $U$. Set $\n := \tr{Lie}(U)$; then a Springer isomorphism $ U \isom \n $ induces a $B$-equivariant isomorphism $ \fp[U] \isom \fp[\n] $ and hence a $B$-equivariant isomorphism $\barnd \isom \fp[\n]$. Since $\fp[\n]$ has a natural $B$-equivariant grading by polynomial degree, we obtain a $B$-equivariant grading $\barndg n$ on $\barnd$.

In \S\ref{sub:algebraic constructions} we construct, for each $\lambda \in \Lambda$, the $\fp$-algebras $\Rh \lambda$ and $\R \lambda$. These rings are defined by inducing (twists of) the $B$-modules $\barndg n$ to $\barg$-modules. We can interpret this construction in the following way. The rings $\Rh \lambda$ are all isomorphic to polynomial rings (cf the proof of Proposition \ref{pr:KLT} below). In particular they are all naturally isomorphic to the ring of functions on $U^- \times U$. Base changing to $k$, $\Rh \lambda \otimes_\fp k$ is isomorphic to the ring of functions on the affine space $F_k$ defined above. Different choices of $\lambda \in \Lambda$ give rise to different $\barg$-algebra structures on this polynomial ring, so the rings $\Rh \lambda$ give a family of $\bargk$-module structures on $k[F_k] \cong k[U^-_k] \otimes k[U_k]$, where $\bargk$ is the hyperalgebra of $G_k$. Taking the $\barg$-locally finite part of $\Rh \lambda$ gives the ring $\R \lambda$. Remark that the rings $\R \lambda$ are \emph{not} all isomorphic for various choices of $\lambda \in \Lambda$.

Motivated by \cite{KLT}, the splitting $\tot$ of $\Rh \lambda$ is constructed via the trace methodology described as follows. Given a polynomial ring $P$ and a choice of algebra generators of $P$ there is a Frobenius-linear trace morphism $\trace$ on $P$, and every Frobenius-linear endomorphism of $P$ is of the form
\begin{equation}
f \mapsto \trace( f \cdot g ) 
\end{equation}
for some fixed $g \in P$. If $Q \subseteq P$ is a subring we can look for $q \in Q$ such that (1) $\trace(f \cdot q) \in Q$ for all $f \in Q$ and (2) $\trace( - \cdot q )$ is a Frobenius splitting of $P$. This will give a Frobenius splitting of the ring $Q$. 

In particular, since $\Rh \lambda$ is a polynomial ring we have a Frobenius-linear trace map $\trace$ on $\Rh \lambda$ corresponding to an appropriate choice of $\fp$-algebra generators of $\Rh \lambda$ (cf \S\ref{sub:spl and the trace map}). We apply the trace methodology to the subring $\R \lambda \subseteq \Rh \lambda$. In these constructions we first work over $\fp$ and then base-change to $k$ later.

In \S\ref{sub:spl} we construct, using representation-theoretic techniques, a Frobenius-linear endomorphism $\spl$ of $ \Rh \lambda $ which turns out (\S \ref{sub:spl and the trace map}) to be the same as the trace morphism $\trace$. In \S\ref{sub:mul} we construct an element $\psi_{f_+ \otimes f_-} \in \R \lambda$ for $\lambda = 0$ and in \S\ref{sub:tot} we show that the Frobenius-linear endomorphism
\begin{equation} \label{eq:tot introduction}
\tot : f \mapsto \spl( \psi_{f_+ \otimes f_-} \cdot f )
\end{equation}
of $\Rh \lambda$ is a Frobenius splitting that preserves $\R \lambda$. In particular, $\tot$ restricts to a Frobenius splitting of $\R \lambda$ as desired. (Remark that below we write $\mulf$ for multiplication by $\psi_{f_+ \otimes f_-}$ and hence, concisely, $ \tot = \spl \circ \mulf $).

In \S\ref{sec:Frobenius} we base-change to $k$ and construct the desired splitting of $\pco$ and hence obtain a splitting of $\co$. We also show that this splitting is the same as one of the homogeneous splittings of $\co$ in \cite{KLT}.

Also, I would like to thank Shrawan Kumar and George McNinch for helpful conversations and an anonymous referee for pointing out typos and areas for improvement.

\section{Algebraic splitting} \label{sec:algebraic splitting}

\subsection{Setup}

Throughout \S2 we assume all algebraic groups, algebras, schemes, vector spaces, etc. are over $\fp$. Recall the groups $G$, $B$, $U$, $T$, etc. from above.

\subsubsection{} \label{subsub:background 1}

\begin{definition} \label{def:good prime} We say that a prime $p$ is \tb{bad} for a simple algebraic group $G$ in the following cases. If $G$ is of type $A_\ell$ then no prime is bad; if $G$ is of type $B_\ell$, $C_\ell$, or $D_\ell$ then $p=2$ is bad; if $G$ is of type $ E_6, E_7, F_4 $, or $G_2$ then $p = 2,3$ are bad; and if $G$ is of type $E_8$ then $p = 2, 3, 5$ are bad. We say that $p$ is a bad prime for a semisimple algebraic group $G$ if it is bad for any of its simple components, and we say that $p$ is a \tb{good} prime for $G$ if it is not bad.
\end{definition}

From here on we assume that $p$ is a good prime for $G$.

For an algebraic group $H$ over $\fp$ let $I \subseteq \fp[H]$ denote the ideal of the identity element. The subspace of the linear dual of $\fp[H]$ consisting of elements that vanish on some power of $I$ is called the hyperalgebra of $H$; it has a natural Hopf algebra structure obtained from the Hopf algebra structure on $\fp[H]$. Let $ \barg $, $ \barb $, $ \barbm $, $ \barn $, $ \barnm $, and $ \bart $ denote the hyperalgebras of $G$, $B$, $B^-$, $U$, $U^-$, and $T$, respectively.

The Frobenius morphism $\fp[G] \to \fp[G]$, $f \mapsto f^p$ induces a morphism $\fr : \barg \to \barg$ of $\fp$-algebras. We will denote the restriction of $\fr$ to $ \barb$, $ \barn $, etc by $\fr$ as well. Let $\ell$ denote the rank of $G$. $\barg$ is generated by elements $E_i^{(n)} \in \barn$, $F_i^{(n)} \in \barnm$, and $ {H_i \choose n} \in \bart $ for $n \geq 0$ and $1 \leq i \leq \ell$. On these generators, we have: 
\begin{subequations}
\begin{equation}
\fr( E_i^{(n)} ) = \left \{ \begin{array}{ll} E_i^{(n/p)} & \tr{if } p \mid n \\
0 & \tr{if } p \nmid n \end{array} \right.
\end{equation}
\begin{equation}
\fr( F_i^{(n)} ) = \left \{ \begin{array}{ll} F_i^{(n/p)} & \tr{if } p \mid n \\
0 & \tr{if } p \nmid n \end{array} \right.  
\end{equation}
and
\begin{equation}
\fr{ H_i \choose n } = \left \{ \begin{array}{ll} { H_i \choose n/p } & \tr{if } p \mid n \\
0 & \tr{if } p \nmid n \end{array} \right.  \, .
\end{equation}

\end{subequations}

\subsubsection{}

By \cite{KL00} and \cite{L90} we have $\fp$-algebra morphisms $\frp : \barn \to \barn$, $ \frpm : \barnm \to \barnm $, and $\frpt : \bart \to \bart$ given by
\begin{subequations}
\begin{equation}
\frp( E_i^{(n)} ) = E_i^{(pn)} \,, 
\end{equation}
\begin{equation}
\frpm( F_i^{(n)} ) = F_i^{(pn)} \,,
\end{equation}
and
\begin{equation}
\frpt \displaystyle \binom{H_i} n = \binom{H_i}{pn}
\end{equation}
for all $1 \leq i \leq \ell$ and $n \geq 0$.
\end{subequations}

Set
\begin{equation} \label{eq:mu0}
\mu_0 := \displaystyle \prod_{i=1}^\ell \binom{H_i - 1}{p-1} = \prod_{i=1}^\ell (1 - H_i^{p-1}) \, , \end{equation}
an idempotent in $\bart$. By \cite{GK10} Theorem 1.4, there is a multiplicative morphism
\begin{subequations}
\begin{equation}
\varphi : \barg \to \barg 
\end{equation}
given by
\begin{equation}
\varphi(YHX) = \frpm Y \cdot \frpt H \cdot \frp X \cdot \mu_0 
\end{equation}
\end{subequations}
for all $Y \in \barnm$, $H \in \bart$, and $X \in \barn$. Further, $\mu_0$ commutes with all elements in the image of $\varphi$, so if we consider $ \im \, \varphi $ as an $\fp$-algebra with unit $\mu_0$, then $\varphi $ is an $\fp$-algebra morphism.

Note that $$\fr (H_i) = \fr \binom{ H_i } 1 = 0 \,.$$ Hence $ \fr( H_i^{p-1} ) = 0 $ which implies $ \fr(\mu_0) = 1 $, and we have the following important fact:
\begin{equation}
\fr \circ \varphi = \id_\barg \, .
\end{equation}

Let $\Lambda$ denote the weight lattice of $G$. For $\lambda \in \Lambda$ let $c_\lambda : \bart \to \fp$ be the character associated to $\lambda$. We have the following result from \cite{GK10}.

\begin{subequations}

\begin{lemma} (Lemme 2.1 in \cite{GK10}) \label{lem:varphi and weights} For all $\lambda \in \Lambda$ we have
\begin{equation} \label{eq:varphi and weights} c_\lambda \circ \varphi|_\bart = \left\{
\begin{array}{ll}
c_{\lambda/p} & \tr{if $\lambda \in p \Lambda$} \\
0 & \tr{if $\lambda \notin p \lambda$.}
\end{array}
\right.
\end{equation}
\end{lemma}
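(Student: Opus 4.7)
The plan is to check the claimed equality of $\fp$-linear maps $\bart \to \fp$ on a natural $\fp$-basis of $\bart$, namely the monomials $B = \prod_{i=1}^\ell \binom{H_i}{n_i}$ for $(n_1, \ldots, n_\ell) \in \Z_{\geq 0}^\ell$, and then reduce the verification to two classical congruences (Fermat's little theorem and Lucas's theorem).

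First I would compute $\varphi(B)$. Taking $Y = X = 1$ in the defining formula for $\varphi$ shows $\varphi|_\bart(H) = \frpt(H) \cdot \mu_0$. Using the multiplicativity of $\varphi$, the commutativity of $\bart$, and the idempotency $\mu_0^2 = \mu_0$, I get
\begin{equation*}
\varphi(B) \; = \; \prod_{i=1}^\ell \varphi \binom{H_i}{n_i} \; = \; \prod_{i=1}^\ell \binom{H_i}{p n_i} \cdot \mu_0 .
\end{equation*}
Writing $m_i := \langle \lambda, \alpha_i^\vee \rangle$, so that $c_\lambda \binom{H_i}{n} = \binom{m_i}{n}$ in $\fp$, applying $c_\lambda$ gives
\begin{equation*}
c_\lambda \circ \varphi (B) \; = \; \prod_{i=1}^\ell \binom{m_i}{p n_i} \cdot \prod_{j=1}^\ell (1 - m_j^{p-1}) \quad \in \fp .
\end{equation*}

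Next I would do the case analysis. By Fermat's little theorem, $m_j^{p-1} = 1$ in $\fp$ whenever $p \nmid m_j$, and $m_j^{p-1} = 0$ whenever $p \mid m_j$. Hence the scalar $c_\lambda(\mu_0) = \prod_j (1 - m_j^{p-1})$ equals $1$ precisely when every $m_j$ lies in $p\Z$, i.e.\ when $\lambda \in p\Lambda$, and equals $0$ otherwise. This immediately handles the case $\lambda \notin p\Lambda$, where every $c_\lambda \circ \varphi(B) = 0$. When $\lambda \in p \Lambda$, I would invoke Lucas's theorem $\binom{pa}{pn} \equiv \binom{a}{n} \pmod p$ to conclude $\binom{m_i}{p n_i} = \binom{m_i/p}{n_i}$ in $\fp$, so that
\begin{equation*}
c_\lambda \circ \varphi(B) \; = \; \prod_{i=1}^\ell \binom{m_i/p}{n_i} \; = \; c_{\lambda/p}(B) ,
\end{equation*}
which is the desired equality.

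There is no real obstacle; the whole lemma is a direct unpacking of the definition of $\varphi|_\bart$, once one notices that the two arithmetic ingredients from elementary number theory do exactly the work required. The only points to keep honest are that $\varphi|_\bart$ is $\fp$-linear (so it is enough to check on a basis) and that $\mu_0$ commutes with and is idempotent on the image of $\frpt$, justifying the factorization of $\varphi(B)$ above.
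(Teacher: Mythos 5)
The paper does not prove this lemma; it is quoted directly from Gros--Kaneda (Lemme~2.1 in \cite{GK10}), so there is no in-paper proof to compare against. Your verification is the natural direct one: check on the monomial basis $B=\prod_i\binom{H_i}{n_i}$ of $\bart$, use multiplicativity of $\varphi$ and idempotency of $\mu_0$ to get $\varphi(B)=\prod_i\binom{H_i}{pn_i}\mu_0$, apply $c_\lambda$, and separate the scalar $c_\lambda(\mu_0)=\prod_j(1-m_j^{p-1})$ (which Fermat dispatches, giving the dichotomy $\lambda\in p\Lambda$ vs.\ not) from the factor $\prod_i\binom{m_i}{pn_i}$.

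The one step you should firm up is the final appeal to Lucas. The integers $m_i=\langle\lambda,\alpha_i^\vee\rangle$ can be negative, and Lucas's theorem in its usual form is a statement about nonnegative integers, so as literally stated the step $\binom{m_i}{pn_i}\equiv\binom{m_i/p}{n_i}\pmod p$ is not covered. The congruence $\binom{pa}{pn}\equiv\binom{a}{n}\pmod p$ does hold for all $a\in\Z$: for $a<0$ combine the reflection $\binom{pa}{pn}=(-1)^{pn}\binom{-pa+pn-1}{pn}$ with Lucas (noting that the base-$p$ ones digit of $-pa+pn-1$ is $p-1$, so it contributes a trivial factor) and the observation $(-1)^{pn}\equiv(-1)^n\pmod p$. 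With this filled in your argument is complete; without it the proof silently fails whenever $\lambda$ is not dominant.
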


In particular,
\begin{equation} \label{eq:mu0 and weights}
c_\lambda(\mu_0) = \left\{
\begin{array}{ll}
1 & \tr{if $\lambda \in p \Lambda$} \\
0 & \tr{if $\lambda \notin p \lambda$.}
\end{array}
\right.
\end{equation}

\end{subequations}

\subsection{Algebraic constructions and preliminaries} \label{sub:algebraic constructions}
\subsubsection{}

For a Hopf algebra with comultiplication $\Delta$ we use the Sweedler notation
\begin{align*}
\Delta X &= \sum X_{(1)} \otimes X_{(2)} \,, \\
\big( (\Delta \otimes \id \big) \circ \Delta)(X) &= \sum X_{(1)} \otimes X_{(2)} \otimes X_{(3)} \,, 
\end{align*}
etc. Let $\epsilon$ and $\sigma$ denote the augmentation and coinverse of $\barg$, respectively. By a slight abuse of notation we will also use the same notation for the various sub-Hopf algebras $\barn$, $\barnm$, etc of $\barg$.

For any $ \bart $-module $V$ (resp. $\barg$-module $W$) let $\finh V$ (resp. $\fing W$) denote the $\bart$ (resp. $\barg$)-locally finite part of $V$ (resp. $W$). Also set $\d V := \finh \, V^*$. If $V$ is a module for $ \barg $, $ \barb $, or $ \barbm $ then so is $\d V$.

Recall that for a Hopf algebra $H$ and algebra $A$, we say that $A$ is an \tb{$H$-module algebra} if $A$ is an $H$-module and
\begin{equation}
h.(ab) = \sum (\sw h 1.a) \cdot (\sw h 2.b)
\end{equation}
for all $h \in H$ and $a, b \in A$.

We have the conjugation (or adjoint) $\barb$-action on $\barn$ given by
\begin{equation}
X*Y = \sum X_{(1)} Y \sigma (X_{(2)}) \,, 
\end{equation}
where $\sigma$ is the coinverse. This action induces a dual action of $\barb$ on $\barnd$, also denoted by $*$. Under the adjoint action, $\barn$ and $\barnd$ become $\barb$-module algebras. From here on, we consider $ \barn $ as a $\barb$-module under the $*$-action.

There is a duality pairing between $ \fp[U] $ and $\barn$ which defines the Hopf algebra structure on $\barn$ (cf \S I.7 in \cite{Ja03}). There is a natural Hopf algebra structure on $\barnd$ obtained from duality with $\barn$ and hence a Hopf algebra isomorphism $ \fp[U] \cong \barnd $. This is also an isomorphism of $\barb$-module algebras, where we take the $\barb$-action on $\fp[U]$ induced by the conjugation action of $B$ on $U$.

\subsubsection{} \label{subsub:Springer isom}

Recall that we are assuming that $p$ is a good prime for $G$. By \cite{SpUnip}, Proposition 3.5, there is a $B$-equivariant Springer isomorphism $ U \cong \n $ which intertwines the conjugation $B$-action on $U$ with the standard $B$-action on $\n$. (There are in fact infinitely many Springer isomorphisms, so let us fix any one of them). Thus we obtain isomorphisms of $\barb$-module algebras
\begin{equation} \label{eq:the 3 barb-module algebras}
\barnd \cong \fp[U] \cong \fp[\n] \cong \symnd \,.
\end{equation}

As $\symnd$ has a natural $\barb$-equivariant algebra grading, this induces a $\barb$-equivariant multiplicative grading $ \barndg n $ on $ \barnd $. Dually, we obtain a $\barb$-equivariant grading $ \barng n $ on $\barn$ such that the comultiplication $\Delta : \barn \to \barn \otimes \barn$ is gradation-preserving under the induced grading on $ \barn \otimes \barn $.

\begin{remark}
For all of the proofs below, we only use the fact that there is a $\barb$-module algebra isomorphism $ \barnd \cong \symnd $; hence we could use any such isomorphism. In particular, instead of a Springer isomorphism, we could use the isomorphism constructed in \cite{FP}. Different choices of isomorphisms may, however, result in different splittings.
\end{remark}

\subsubsection{Induction functors and duality}  \label{subsub:background 2}

Let $M$ be a $B$-module. Then $\Hom_{\barb}( \barg, M )$ has a $\barg$-module structure given by \begin{equation}
(Y.f)(X) = f(XY) \tr{ for all }X, Y \in \barg \tr{ and } f \in \Hom_{\barb}( \barg, M ) \,. 
\end{equation}

For any $B$-module $M$ set
\begin{subequations}
\begin{equation}
\indg M := \fing \, \Hom_{\barb}( \barg, M ) 
\end{equation}
and
\begin{equation}
\indh M := \indhugly M  \, , 
\end{equation}
\end{subequations}
Note that we have inclusions of $\barg$-modules $$  \indg M \subseteq \indh M \subseteq \Hom_\barb( \barg, M )  \, .$$

We will frequently use the following fact. For any $\bart$-locally finite $\barb$-module $M$ we have $\bart$-module isomorphisms
\begin{equation} \label{indh M = hom involving fp}  \indh M \cong \finh \, \Hom_\fp \big(  \barnm, M  \big)  \cong \barnmd \otimes M \, .
\end{equation}

\subsubsection{} \label{subsub:identification of indh barnd}
Consider the group algebra $\fp[\Lambda]$ of the lattice $\Lambda$; then $ \fp[\Lambda] $ is naturally a $\bart$-module algebra. We make it into a $\barb$-module algebra by giving it a trivial $\barn$-action. For each $\lambda \in \Lambda$ let $v_\lambda \in \fp[\Lambda]$ denote the element corresponding to $\lambda$. Then, in particular, we have
\begin{equation}
v_\lambda \cdot v_\mu = v_{\lambda + \mu}
\end{equation}
for all $\lambda, \mu \in \Lambda$. We also identify $\fp. v_0 $ with $\fp$ via the basis element $v_0$. This induces a bilinear pairing
\begin{equation} \label{eq:weight pairing}
\fp . v_\lambda \otimes \fp. v_{-\lambda} \to \fp. v_0 \to \fp
\end{equation}
for all $\lambda \in \Lambda$.

For $\lambda \in \Lambda$ let $\chi_\lambda$ denote the 1-dimensional $ \barb $-module corresponding to the character $\lambda$ of $\bart$ and set \begin{equation} \label{eq:ho} \ho \lambda := \indg{\chi_{-\lambda}} \, ,\end{equation} the induced $G$-module with lowest weight $-\lambda$. In the sequel we will freely identify $\chi_\lambda$ with $\fp.v_\lambda \subseteq \fp[\Lambda]$.

\begin{lemma} Choose $\lambda \in \Lambda$. There is a natural $\barg$-equivariant inclusion
\begin{equation} \label{eq:identification of indh barnd}
\Indh{ \barnd \otimes \chi_{-\lambda} } \hookrightarrow \big( \barg \otimes \barn \otimes \chi_\lambda \big)^* \, ,
\end{equation}
where the $\barg$-action on $\big( \barg \otimes \barn \otimes \chi_\lambda \big)^*$ is given by
\begin{equation} \label{eq:barg action}
(Z.f)(X \otimes Y \otimes v_\lambda) = f(XZ \otimes Y \otimes v_\lambda)
\end{equation}
for all $X, Z \in \barg$ and $Y \in \barn$.

Further, the image of the inclusion (\ref{eq:identification of indh barnd}) consists of the $\bart$-locally finite $f \in \big( \barg \otimes \barn \otimes \chi_\lambda \big)^*$ such that
\begin{equation} \label{eq:barb equivariance}
f(AX \otimes Y \otimes v_\lambda) = f \big( X \otimes \sigma A*(Y \otimes v_\lambda) \big)
\end{equation} for all $A \in \barb$.
\end{lemma}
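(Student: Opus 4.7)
The plan is to define the inclusion via the natural pairings and then verify each of the claimed properties. For $f \in \Indh{\barnd \otimes \chi_{-\lambda}} \subseteq \Hom_\barb(\barg, \barnd \otimes \chi_{-\lambda})$, set
\begin{equation*}
\tilde f(X \otimes Y \otimes v_\lambda) := \langle f(X), Y \otimes v_\lambda \rangle,
\end{equation*}
where the pairing on the right is the product of the canonical evaluation pairing $\barnd \otimes \barn \to \fp$ with the pairing (\ref{eq:weight pairing}) between $\chi_{-\lambda}$ and $\chi_\lambda$. For $\barg$-equivariance, recall that the $\barg$-action on $\Hom_\barb(\barg, \barnd \otimes \chi_{-\lambda})$ is $(Z.f)(X) = f(XZ)$, so
\begin{equation*}
(\widetilde{Z.f})(X \otimes Y \otimes v_\lambda) = \langle f(XZ), Y \otimes v_\lambda \rangle = \tilde f(XZ \otimes Y \otimes v_\lambda) = (Z.\tilde f)(X \otimes Y \otimes v_\lambda),
\end{equation*}
which matches the action (\ref{eq:barg action}) on the target.

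Injectivity is immediate from the non-degeneracy of $\barnd \otimes \barn \to \fp$: if $\tilde f \equiv 0$, then $f(X) \in \barnd \otimes \chi_{-\lambda}$ pairs to zero against every $Y \otimes v_\lambda$, forcing $f(X) = 0$ for all $X \in \barg$. For the image characterization, the $\barb$-equivariance $f(AX) = A.f(X)$ of $f$ translates into (\ref{eq:barb equivariance}) via the standard contragredient identity $\langle A.\xi, \eta \rangle = \langle \xi, \sigma(A).\eta \rangle$ applied to the diagonal $\barb$-action $\sigma A * (Y \otimes v_\lambda) = \sum \sigma(A)_{(1)} * Y \otimes \sigma(A)_{(2)}.v_\lambda$ on $\barn \otimes \chi_\lambda$. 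Conversely, given $g \in \big(\barg \otimes \barn \otimes \chi_\lambda\big)^*$ satisfying (\ref{eq:barb equivariance}), the adjoint map $\hat g : \barg \to \barn^* \otimes \chi_{-\lambda}$ defined by $\hat g(X)(Y \otimes v_\lambda) = g(X \otimes Y \otimes v_\lambda)$ is $\barb$-equivariant by (\ref{eq:barb equivariance}).

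The main obstacle is reconciling the two instances of the $\bart$-local finiteness condition. I must show that $\bart$-local finiteness of $g$ under the right-multiplication action (\ref{eq:barg action}) is equivalent to $\hat g$ having image inside the torus-locally finite subspace $\barnd \otimes \chi_{-\lambda} \subseteq \barn^* \otimes \chi_{-\lambda}$, combined with $\bart$-local finiteness of $\hat g$ as an element of $\Hom_\barb(\barg, \barnd \otimes \chi_{-\lambda})$. The tool is the weight decomposition of $\barn$ and $\barnm$ under the $*$-action of $\bart$ together with the PBW decomposition $\barg = \barb \cdot \barnm$: specializing (\ref{eq:barb equivariance}) to $A = H_i \in \bart$ yields an explicit scalar identity relating the right-multiplication $\bart$-action on $g$ to the weights of $Y \in \barn$ and the $\barnm$-weight of the PBW factor of $X$. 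This weight compatibility converts the finite-dimensional $\bart$-orbit of $g$ into a finite bound on the weight support of $\hat g(X)$ in $\barn^*$, forcing $\hat g(X) \in \barnd \otimes \chi_{-\lambda}$, and simultaneously produces the necessary $\bart$-local finiteness of $\hat g$ in the induced module.
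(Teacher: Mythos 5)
Your proposal constructs exactly the map $\theta$ used in the paper (there $\theta(f)(X \otimes Y \otimes v_\lambda) = f(X)(Y \otimes v_\lambda)$) and fills in the verifications the paper dismisses as ``straightforward,'' correctly flagging the reconciliation of the two $\bart$-local-finiteness conditions as the only nontrivial point and outlining the right mechanism via $\barg = \barb \cdot \barnm$ and the commutation of $\bart$ past $\barnm$. One small caution for the write-up: specializing (\ref{eq:barb equivariance}) to $A = H_i$ alone is not enough in characteristic $p$ (it only controls weights mod $p$); you need the relation for all divided-power elements $\binom{H_i}{m} \in \bart$, which the PBW commutation relations do supply.
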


\begin{proof}
From (\ref{eq:weight pairing}) we can naturally identify $\chi_{-\lambda}$ with $\chi_\lambda^*$. Hence for $f \in \Indh{ \barnd \otimes \chi_{-\lambda} }$ and $X \in \barg$ we can consider $f(X)$ as an element of $\big( \barn \otimes \chi_\lambda \big)^*$. We define the inclusion (\ref{eq:identification of indh barnd}), denoted by $\theta$, as follows: for $f \in \Indh{ \barnd \otimes \chi_{-\lambda} }$, $X \in \barg$, and $Y \in \barn$ set
\begin{equation}
\theta(f)( X \otimes Y \otimes v_\lambda ) = f(X)( Y \otimes v_\lambda ) \,.
\end{equation}
The rest of the statements in the lemma are now straightforward to verify.
\end{proof}

In the sequel, for ease of computation we will frequently use this lemma to identify $ \Indh{ \barnd \otimes \chi_{-\lambda} } $ with its image under the inclusion (\ref{eq:identification of indh barnd}). Remark that (\ref{eq:barb equivariance}) is just the statement that $f$ is $\barb$-linear.

\subsubsection{The algebras $\Rh \lambda$ and $\R \lambda$} \label{subsub:coind barn coalgebra structure}

For any $ \mu, \lambda \in \Lambda $ we have (using the identification (\ref{eq:identification of indh barnd}) above) a $\barg$-equivariant multiplication map
\begin{subequations}
\begin{equation} \label{eq:multiplication in Rh}
\Indh{ \barnd \otimes \chi_{-\mu} } \otimes \Indh{ \barnd \otimes \chi_{-\lambda} } \to \Indh{ \barnd \otimes \chi_{-\mu - \lambda} }
\end{equation}
given by
\begin{equation} \label{eq:multiplication in ih}
(f \cdot g)( X \otimes Y \otimes v_{\mu + \lambda} ) = \sum f( \sw X 1 \otimes \sw Y 1 \otimes v_\mu ) \cdot g( \sw X 2 \otimes \sw Y 2 \otimes v_\lambda ) \, .
\end{equation} Since comultiplication in $\barn$ preserves the gradation, the multiplication map (\ref{eq:multiplication in Rh}) restricts to a degree-preserving map
\begin{equation}
\Indh{ \barndg n \otimes \chi_\mu } \otimes \Indh{ \barndg m \otimes \chi_\lambda } \to \Indh{ \barndg{n+m} \otimes \chi_{\mu + \lambda} }
\end{equation} for all $n, m \geq 0$. 
\end{subequations}

\begin{subequations}
For $\lambda \in \Lambda$ set
\begin{equation}
\Rh \lambda := \bigoplus_{n \geq 0} \ih n \lambda  \, .
\end{equation} By the above, $\Rh \lambda$ is a $\barg$-module algebra. Also set
\begin{equation}
\R \lambda := \fing \, \Rh \lambda = \bigoplus_{n \geq 0} \ig n \lambda  \, .
\end{equation} Since multiplication is $\barg$-equivariant, $\R \lambda$ is a $\barg$-module subalgebra of $\Rh \lambda$.
\end{subequations}

\begin{remark} Note that by (\ref{indh M = hom involving fp}) we have a natural $\fp$-algebra inclusion
\begin{equation}
\Rh \lambda \hookrightarrow \barnmd \otimes \barnd \otimes \fp[\Lambda]
\end{equation}
for all $\lambda \in \Lambda$.
\end{remark}

\subsection{The $p^{th}$ power morphism $\frtstar$} \label{sub:the pth power morphism}

\subsubsection{} \label{subsub:the pth power morphism}

Recall the morphism $\fr$ from \S\ref{subsub:background 1}. Let $\frstar$ (resp. $\frstar^-$) be the endomorphism of $\barnd$ (resp. $\barnmd$) dual to the endomorphism $\fr$ of $\barn$ (resp. $\barnm$). Note that since $\fr$ is a Hopf algebra morphism, so are $\frstar$ and $\frstar^-$.

\begin{lemma} \label{lem:frstar is the pth power morphism}
$\frstar$ (resp. $\frstar^-$) is the $p^{th}$ power morphism on $\barnd$ (resp. $\barnmd$).
\end{lemma}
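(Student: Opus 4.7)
The plan is to trace the definitions through the natural algebra isomorphism $\barnd \cong \fp[U]$ recalled in \S\ref{sub:algebraic constructions}, and observe that the $p^{th}$ power map on $\fp[U]$ corresponds, under duality, precisely to $\frstar$ on $\barnd$.

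First I would recall that $\fr$ is by definition induced on $\barg$ by the algebra Frobenius $f \mapsto f^p$ on $\fp[G]$; hence its restriction $\fr|_\barn$ is dual, with respect to the duality pairing $\barn \otimes \fp[U] \to \fp$, to the $p^{th}$ power map $F : \fp[U] \to \fp[U]$, $f \mapsto f^p$. Concretely, $(\fr X)(f) = X(f^p)$ for all $X \in \barn$ and $f \in \fp[U]$.

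Given this, I would then unwind the definition of $\frstar$. Identify $\phi \in \barnd$ with its image $f \in \fp[U]$, so that $\phi(X) = X(f)$ for all $X \in \barn$. Since $\frstar$ is by definition the linear dual of $\fr|_\barn$,
\begin{equation*}
(\frstar \phi)(X) = \phi(\fr X) = (\fr X)(f) = X(f^p)
\end{equation*}
for every $X \in \barn$, showing that $\frstar \phi$ corresponds to $f^p$ under $\barnd \cong \fp[U]$. Because this identification is an isomorphism of $\fp$-algebras, $\frstar$ coincides with the $p^{th}$ power morphism on $\barnd$. The proof for $\barnmd$ and $\frstar^-$ is identical, with $U$, $\barn$, $\fr$ replaced by $U^-$, $\barnm$, and $\fr|_{\barnm}$ throughout.

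I do not anticipate any substantial obstacle: the content is just the naturality of Frobenius under the duality between a commutative Hopf algebra and its dual hyperalgebra, and the argument amounts to unwinding the identifications set up in \S\ref{sub:algebraic constructions}.
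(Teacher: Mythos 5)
Your proof is correct and follows essentially the same route as the paper: both observe that $\fr$ is dual to the $p^{th}$ power map on $\fp[U]$ and then transport this fact through the algebra isomorphism $\barnd \cong \fp[U]$. You simply spell out the unwinding $(\frstar\phi)(X) = \phi(\fr X) = X(f^p)$ explicitly, which the paper leaves implicit.
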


\begin{proof}
By definition, $\fr$ is dual to the $p^{th}$ power morphism on $\fp[U]$. Since $\barnd \cong \fp[U]$ as $\fp$-algebras (cf (\ref{eq:the 3 barb-module algebras}) above), we have that $\frstar$ is the $p^{th}$ power map on $\barnd$. The statement about $\frstar^-$ is proved similarly.
\end{proof}

\subsubsection{}

Choose $\lambda \in \Lambda$. Since $\frstar$ is the $p^{th}$ power morphism on $\barnd$ it sends $\barndg n$ to $\barndg {pn}$ and we have an endomorphism $\frtstar$ of $\Rh \lambda$ given by the direct sum of the morphisms
\begin{align}
\ih n \lambda & \to \ih {pn} \lambda \, , \nonumber \\
(\frtstar f)(X \otimes Y \otimes v_{pn \lambda}) &= f( \fr X \otimes \fr Y \otimes v_{n \lambda} )
\end{align}
for all $X \in \barg$ and $Y \in \barn$.

\begin{proposition} \label{pr:frtstar is the pth power morphism} $\frtstar$ is the $p^{th}$ power morphism on $R_\lambda^\h$ (and hence restricts to the $p^{th}$ power morphism on $R_\lambda$).
\end{proposition}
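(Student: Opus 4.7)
The plan is to reduce the statement to the already-established fact that $\frstar$ (resp.\ $\frstar^-$) is the $p^{th}$ power morphism on $\barnd$ (resp.\ $\barnmd$), by transporting the computation into the ambient commutative ring $\barnmd \otimes \barnd \otimes \fp[\Lambda]$, which contains $\Rh \lambda$ as an $\fp$-subalgebra via the remark at the end of \S\ref{subsub:coind barn coalgebra structure}.

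First I would make $\frtstar$ explicit in this embedding. An element $f \in \ih n \lambda$, viewed via the inclusion (\ref{eq:identification of indh barnd}) inside $\bigl( \barg \otimes \barn \otimes \chi_{n\lambda} \bigr)^*$, is determined by its values on $\barnm \otimes \barn \otimes v_{n\lambda}$ (thanks to the triangular decomposition $\barg = \barnm \bart \barn$ together with the $\barb$-linearity condition (\ref{eq:barb equivariance})). Thus $f$ corresponds to an element of the form $\tilde f \otimes v_{n\lambda}$, where $\tilde f(Z \otimes Y) := f(Z \otimes Y \otimes v_{n\lambda})$ for $Z \in \barnm$ and $Y \in \barn$. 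The defining formula for $\frtstar$ then yields
\[
(\widetilde{\frtstar f})(Z \otimes Y) \; = \; f(\fr Z \otimes \fr Y \otimes v_{n\lambda}) \; = \; \tilde f(\fr Z \otimes \fr Y) \; = \; \bigl( (\frstar^- \otimes \frstar)(\tilde f) \bigr)(Z \otimes Y),
\]
so under the embedding $\frtstar$ sends $\tilde f \otimes v_{n\lambda}$ to $(\frstar^- \otimes \frstar)(\tilde f) \otimes v_{pn\lambda}$.

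To finish, I would invoke Lemma~\ref{lem:frstar is the pth power morphism} to replace $\frstar^- \otimes \frstar$ by the $p^{th}$ power morphism on $\barnmd \otimes \barnd$. Combined with the identity $v_{n\lambda}^p = v_{pn\lambda}$ in $\fp[\Lambda]$, this says precisely that $\frtstar(\tilde f \otimes v_{n\lambda}) = (\tilde f \otimes v_{n\lambda})^p$ inside $\barnmd \otimes \barnd \otimes \fp[\Lambda]$. Restricting to the subalgebra $\Rh \lambda$ (and further to the $\barg$-locally finite part $\R \lambda$) gives the claim.

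The main obstacle is bookkeeping rather than substance: one must be sure that the embedding $\Rh \lambda \hookrightarrow \barnmd \otimes \barnd \otimes \fp[\Lambda]$ is indeed an $\fp$-algebra map when multiplication on the left is given by the convolution formula (\ref{eq:multiplication in ih}). This reduces to the fact that comultiplication in $\barn$ (which is dual to multiplication in $\barnd$), together with the analogous fact for $\barnm$ and the group-algebra multiplication $v_\mu \cdot v_\nu = v_{\mu+\nu}$ in $\fp[\Lambda]$, is precisely what (\ref{eq:multiplication in ih}) encodes on the weight-$v_{n\lambda}$ slice. Once this compatibility is in hand, the computation above finishes the proof.
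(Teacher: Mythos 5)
Your proposal is correct and follows essentially the same route as the paper: both identify $\Rh\lambda$ with $\bigoplus_{n\ge 0}\barnmd\otimes\barndg n\otimes\chi_{-n\lambda}$ via (\ref{indh M = hom involving fp}), observe that $\frtstar$ becomes $\frstar^-\otimes\frstar$ under this identification (with the $\fp[\Lambda]$-factor behaving trivially since $v_\mu^p=v_{p\mu}$), and invoke Lemma~\ref{lem:frstar is the pth power morphism}. Your write-up makes the bookkeeping more explicit than the paper, which simply asserts the algebra isomorphism and then cites the lemma.
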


\begin{proof} There are natural algebra isomorphisms
\begin{equation} \label{eq:frtstar is the pth power morphism}
\Rh \lambda \cong \bigoplus_{n \geq 0} \bargd \otimes_\barb \left( \barndg n \otimes \chi_{-n \lambda} \right) \cong \bigoplus_{n \geq 0} \barnmd \otimes  \barndg n \otimes \chi_{-n \lambda} \, . 
\end{equation}
The algebra structure on the ring on the right-hand side of (\ref{eq:frtstar is the pth power morphism}) is induced from the algebra structure on $ \barnmd \otimes \barnd $, so it suffices to verify that the endomorphism $ \frstar^- \otimes \frstar $ of $ \barnmd \otimes \barnd $ is the $p^{th}$ power morphism. But this is clear by Lemma \ref{lem:frstar is the pth power morphism}.
\end{proof}

\subsection{The morphism $\spl$} \label{sub:spl}

\subsubsection{The small hyperalgebras}
Set $E_0 :=  \displaystyle \prod_{\beta \in \Delta^+} \EE\beta{p-1}$ and $F_0 :=  \displaystyle \prod_{\beta \in \Delta^+} \FF\beta{p-1}$. By \cite{Hab80}, Proposition 6.7, $E_0$ and $F_0$ are independent of the ordering of the roots. Let $\rho$ denote the half-sum of the positive roots; then $E_0$ (resp. $F_0$) has weight $2(p-1) \rho$ (resp. $-2(p-1) \rho$).

Let $ \smalln $ denote the "small" hyperalgebra associated to $U$, i.e. the sub-Hopf algebra of $\barn$ generated by $ \displaystyle \prod_{\beta \in \Delta^+} \E\beta{m_\beta} $ for $0 \leq m_\beta < p$ (where we take any fixed ordering of $\Delta^+$). Similarly, we have the sub-Hopf algebra $ \smallnm $ of $\barnm$.

\begin{subequations}
Also let $ \smallt $ denote the sub-Hopf algebra of $ \bart $ generated by the elements $ \displaystyle \prod_{i=1}^\ell \binom{H_i}{n_i} $ for $0 \leq n_i < p$. The equality
\begin{equation}
\binom {pn} m = 0 \tr{ for all } n \in \Z \tr{ and } 0 \leq m < p
\end{equation}
in $\fp$ implies
\begin{equation} \label{eq:smallt and p weights}
c_{p \lambda + \mu} ( z ) = c_\mu(z) \tr{ for all } \mu, \lambda \in \Lambda \tr{ and } z \in \smallt \, .
\end{equation}
\end{subequations}

For any Hopf algebra $H$ let $H^+$ denote the augmentation ideal. We have the following useful result.

\begin{lemma}[\cite{Hab80}, Lemmas 6.5 and 6.6 and Proposition 6.7] \label{lem: basic E0 and F0 facts} $E_0$ (resp. $F_0$) is central in $\barn$ (resp. $\barnm$). In particular, $E*E_0 = 0$ and $F*F_0=0$ for all $E \in \barn^+$ and $F \in \barnm^+$. Further, $ E_0 \cdot \smalln^+ = 0 $ and $ F_0 \cdot \smallnm^+ = 0 $.
\end{lemma}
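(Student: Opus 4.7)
The plan is to follow the approach of Haboush in \cite{Hab80}, where all four claims are ultimately controlled by the combinatorics of Kostant's $\Z$-form of $U(\n)$ specialized at $\fp$. Centrality of $E_0$ does the substantive work; the other three conclusions are short formal consequences.

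First I would prove that $E_0 = \prod_{\beta \in \Delta^+} \E{\beta}{p-1}$ is central in $\barn$. Since $\barn$ is generated as an $\fp$-algebra by the divided powers $\E{i}{n}$ for $1 \leq i \leq \ell$ and $n \geq 1$, it suffices to verify $[\E{i}{n}, E_0] = 0$ on these generators. The commutation relations in Kostant's $\Z$-form express $\E{i}{n} \cdot \E{\beta}{p-1}$ as $\E{\beta}{p-1} \cdot \E{i}{n}$ plus a sum of monomials in divided powers $\E{\gamma}{m_\gamma}$ at other positive roots $\gamma$, obtained by iterated bracketing of $\alpha_i$ with $\beta$. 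The crucial input is the binomial identity $\binom{p-1+m}{m} \equiv 0 \pmod{p}$ for $1 \leq m < p$, which implies $\E{\gamma}{p-1} \cdot \E{\gamma}{m} = 0$ in $\barn$ for such $m$. Propagating each correction term past the remaining factors of $E_0$ (by the same type of commutation, with induction on the height of the root being transported) eventually produces a matched pair $\E{\gamma}{p-1} \cdot \E{\gamma}{m_\gamma}$ with $m_\gamma \geq 1$, which vanishes. This yields centrality of $E_0$; the same argument applied to negative roots gives centrality of $F_0$ in $\barnm$.

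Next, centrality gives the vanishing of the adjoint action formally. For $X \in \barn^+$,
\begin{equation*}
X * E_0 = \sum X_{(1)} E_0 \sigma(X_{(2)}) = E_0 \sum X_{(1)} \sigma(X_{(2)}) = \epsilon(X) \cdot E_0 = 0,
\end{equation*}
where the second equality uses centrality and the third is the defining antipode identity; the same computation works for $F * F_0$ with $F \in \barnm^+$. For $E_0 \cdot \smalln^+ = 0$ I would use the PBW-type basis of $\smalln$ consisting of monomials $\prod_\beta \E{\beta}{m_\beta}$ with $0 \leq m_\beta < p$ (for some fixed ordering of $\Delta^+$); the augmentation ideal $\smalln^+$ is spanned by those with some $m_\beta \geq 1$. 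Using centrality of $E_0$ to rearrange $E_0 \cdot \prod_\beta \E{\beta}{m_\beta}$ so that each factor $\E{\beta}{p-1}$ lands next to the matching $\E{\beta}{m_\beta}$, the commutator correction terms vanish by the binomial argument from the first step, and any surviving matched pair with $m_\beta \geq 1$ kills the product. The argument for $F_0 \cdot \smallnm^+ = 0$ is identical.

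The main obstacle is the centrality of $E_0$: one must carefully control the correction terms arising from the non-abelian commutation relations in Kostant's $\Z$-form, and verify that every such term is ultimately absorbed by a matched divided-power pair that vanishes mod $p$. Since this bookkeeping is done in detail in Lemmas 6.5 and 6.6 and Proposition 6.7 of \cite{Hab80}, my proof would simply cite those results rather than reproduce the combinatorial verification.
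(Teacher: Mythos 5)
The paper offers no proof of its own here --- it simply cites Haboush's Lemmas 6.5, 6.6 and Proposition 6.7 --- so your plan of sketching the argument and then deferring the combinatorial bookkeeping to Haboush is in line with what the paper does. Your Hopf-algebraic derivation of $E * E_0 = 0$ from centrality of $E_0$, via $\sum X_{(1)} E_0\, \sigma(X_{(2)}) = E_0 \sum X_{(1)} \sigma(X_{(2)}) = \epsilon(X) E_0$, is clean and complete, and your account of why $\E\gamma{p-1}\E\gamma m = \binom{p-1+m}{m}\E\gamma{p-1+m} = 0$ in $\barn$ for $1 \le m < p$ is correct.

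One small imprecision: in your argument for $E_0 \cdot \smalln^+ = 0$ you invoke ``centrality of $E_0$'' to slide the factor $\E\beta{p-1}$ inside $E_0$ next to the matching $\E\beta{m_\beta}$. Centrality of $E_0$ as a whole does not permit permuting the individual factors within the product $\prod_\beta \E\beta{p-1}$; what you actually need is the ordering-independence of $E_0$ (the content of Haboush's Proposition 6.7, noted separately in the paper just before the lemma), together with control of the commutator correction terms. Since you explicitly punt the bookkeeping to Haboush, this does not invalidate the proposal, but the appeal to centrality there should be replaced by an appeal to ordering-independence.
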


We also need the following technical lemma.

\begin{lemma} \label{lem:useful E0 and F0 facts} \begin{enumerate}[(1)] 
\item $ E_0 \cdot \frp (Z * Y) = E_0 \cdot \big(  \frp Z * \frp Y  \big) $ for all $Y, Z \in \barn$.
\item $E_0 \cdot ( N * X ) = 0$ for all $N \in \smalln^+$ and $X \in \barn$.
\end{enumerate}
\end{lemma}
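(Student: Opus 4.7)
Part (2) is the easier of the two. I would decompose the coproduct of $N$: since $\smalln$ is a sub-Hopf algebra, the counit identities give
\[ \Delta N = N \otimes 1 + 1 \otimes N + \sum N^+_{(1)} \otimes N^+_{(2)}, \quad N^+_{(i)} \in \smalln^+. \]
Expanding $N * X = \sum N_{(1)} X \sigma(N_{(2)})$ accordingly splits it into three types of terms; left-multiplying by $E_0$, each vanishes: the $NX$-term and the doubly-primed cross-term by Lemma~\ref{lem: basic E0 and F0 facts} ($E_0 \cdot \smalln^+ = 0$), and the $X \sigma(N)$-term by first using centrality of $E_0$ in $\barn$ to commute it past $X$ and then invoking $\sigma(\smalln^+) \subseteq \smalln^+$ to conclude $E_0 \sigma(N) = 0$.

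For (1), I would introduce the comultiplicativity defect
\[ \delta(Z) := \Delta(\frp Z) - (\frp \otimes \frp)(\Delta Z) \in \barn \otimes \barn, \]
which measures how far $\frp$ is from being a coalgebra morphism. The plan is to prove the structural result that $\delta(Z) \in (\barn \smalln^+ \barn) \otimes \barn$ for every $Z \in \barn$. Granting this, I would first check the auxiliary identity $\sigma \circ \frp = \frp \circ \sigma$ on $\barn$ (both sides are anti-endomorphisms agreeing on divided-power generators $E_\beta^{(n)}$, since $\sigma E_\beta^{(n)} = (-1)^n E_\beta^{(n)}$ and $(-1)^{pn} \equiv (-1)^n$ in $\fp$ in every characteristic). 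This allows one to rewrite
\[ \frp Z * \frp Y - \frp(Z * Y) = \sum_i A_i \cdot \frp Y \cdot \sigma(B_i), \]
where $\delta(Z) = \sum_i A_i \otimes B_i$. Centrality of $E_0$ and $E_0 \cdot \smalln^+ = 0$ together imply $E_0 \cdot \barn \smalln^+ \barn = 0$, so $E_0$ annihilates the displayed sum, yielding (1).

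The main obstacle is establishing the structural claim $\delta(Z) \in (\barn \smalln^+ \barn) \otimes \barn$. The starting observation is the twisted Leibniz rule
\[ \delta(XY) = \Delta(\frp X) \cdot \delta(Y) + \delta(X) \cdot (\frp \otimes \frp)(\Delta Y), \]
which follows from writing $\delta$ as a difference of two algebra morphisms. Because $\barn \smalln^+ \barn \otimes \barn$ is a two-sided ideal of $\barn \otimes \barn$, this reduces the verification to algebra generators of $\barn$, for which I take the PBW generators $E_\beta^{(n)}$ ($\beta \in \Delta^+$, $n \geq 0$). For such $Z$, the primitive-like formula $\Delta E_\beta^{(m)} = \sum_{a+b=m} E_\beta^{(a)} \otimes E_\beta^{(b)}$ yields
\[ \delta(E_\beta^{(n)}) = \sum_{\substack{j+k=pn\\ p \nmid j}} E_\beta^{(j)} \otimes E_\beta^{(k)}, \]
and for each such $j$, writing $j = pq + r$ with $1 \leq r \leq p-1$ and using Lucas's theorem to get $\binom{pq+r}{r} \equiv 1 \pmod p$ gives the factorization $E_\beta^{(j)} = E_\beta^{(r)} \cdot \frp(E_\beta^{(q)}) \in \smalln^+ \cdot \barn$, completing the reduction.
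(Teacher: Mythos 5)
Your proof is correct, and it genuinely reorganizes the argument relative to the paper. For part (2), the paper reduces to the algebra generators $E_i^{(m)}$ ($0 < m < p$, $i$ simple) of $\smalln^+$ and then computes the adjoint action term by term; you instead handle an arbitrary $N \in \smalln^+$ in one stroke via the standard reduced-coproduct decomposition $\Delta N = N\otimes 1 + 1\otimes N + (\text{terms in } \smalln^+\otimes\smalln^+)$, which is cleaner and avoids the reduction entirely. For part (1), the paper again reduces to $Z = E_i^{(m)}$ (using $E_0\cdot(A*B) = A*(E_0 B)$ together with multiplicativity of $\frp$) and computes directly, the crux being that the terms $E_0\,E_i^{(j)}$ with $p\nmid j$ vanish. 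You repackage that same crux into the structural statement $\delta(Z) \in (\barn\,\smalln^+\,\barn)\otimes\barn$ for the comultiplicativity defect $\delta$, proved by a twisted Leibniz rule plus the observation that $E_\beta^{(j)}\in\smalln^+\barn$ whenever $p\nmid j$; and you explicitly isolate the auxiliary identity $\sigma\circ\frp = \frp\circ\sigma$, which the paper never needs to state because it manipulates the adjoint action through its explicit divided-power expansion rather than through $\sigma$ abstractly. Both routes must reduce to algebra generators of $\barn$; one small economy would be to use the simple-root generators $E_i^{(n)}$ (as the paper does) rather than all $E_\beta^{(n)}$, since the formula $\frp(E_\beta^{(n)}) = E_\beta^{(pn)}$ for non-simple $\beta$, while true, is not among the facts the paper records for $\frp$.
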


\begin{proof} (1) Since $\frp$ is an $\fp$-algebra morphism and since
\begin{equation} \label{eq:useful E0 and F0 facts}
E_0 \cdot (A*B) = A * (E_0 B)
\end{equation}
for all $A, B \in \barn$ (by the centrality of $E_0$), it suffices to verify the statement in the case that $Z = \E i m$ for some $1 \leq i \leq \ell$ and $m > 0$. We have:
\begin{align*} E_0 \cdot \big(  (\frp \E i m) * \frp Y  \big) &= E_0 \cdot \big( \E i {pm} * \frp Y \big) \\
&=  \sum_{j=0}^{pm} (-1)^{pm-j} \, E_0 \E i j \frp (Y) \E i {pm - j} \\
&=  \sum_{j=0}^m (-1)^{pm-pj} E_0 \E i {pj} \frp(Y) \E i {pm - pj} \\
& \quad \tr{(by Lemma \ref{lem: basic E0 and F0 facts})} \\
&=  \sum_{j=0}^m (-1)^{m-j} E_0 \frp \big( \E i j Y \E i {m - j} \big) \\
&=  E_0 \cdot \frp \big( \E i m * Y) \, .
\end{align*}

(2) Since $\smalln^+$ is generated by $\E i m$ for $1 \leq i \leq \ell$ and $0 < m < p$ it suffices to check that
\begin{equation}
E_0 \cdot (\E i m * X) = 0
\end{equation}
for all $X \in \barn$, $1 \leq i \leq \ell$, and $0 < m < p$. We have (using Lemma \ref{lem: basic E0 and F0 facts})
\begin{align*} E_0 \cdot (\E i m * X) &= E_0 \cdot \left( \sum_{j = 0}^m (-1)^{m-j} \E i j X \E i {m-j} \right) \\
&= X E_0 \, \E i m + \sum_{j = 1}^m (-1)^{m-j} E_0 \E i j X \E i {m-j} \\
&\quad \tr{(since $E_0$ is central in $\barn$)} \\
&=  0 \; \; \tr{(since $E_0 \cdot \smalln = 0$).}
\end{align*}
\end{proof}

\subsubsection{The morphism $\spl$} \label{subsub:spl}

Set $N := | \Delta^+ |$. For $n \geq 0$ and $\lambda \in \Lambda$ define a morphism
\begin{subequations}
\begin{equation}
\spl : \ihe n \lambda \to \ih n \lambda
\end{equation}
by
\begin{equation}  \label{eq:spl}
(\spl f)( X \otimes Y \otimes v_{n \lambda} ) = f( F_0 \cdot \varphi X \otimes E_0 \cdot \frp Y \otimes v_{pn\lambda} )
\end{equation} for all $X \in \barg$, $Y \in \barng n$, and $f \in \ihe n \lambda$.
\end{subequations}
(Here we are considering $f$ as an element of $\Indh{ \barnd \otimes \chi_{-pn\lambda} }$ under the natural inclusion). Note that $\spl$ is not a morphism of $\barg$-modules.

It is not clear that $\spl$ is well-defined, so we must prove that. We first have the following technical lemma.

\begin{lemma} \label{lem:F0 and E i pm conjugation} For all $\mu \in \Lambda$, $m \geq 0$, $1 \leq i \leq \ell$, $X \in \barg$, $Y \in \barn$, and $f \in \Indh{ \barnd \otimes \chi_{-p \mu} }$, we have $$ f \big( F_0 \, \E i {pm} X \otimes E_0 \, \frp Y \otimes v_{p \mu}  \big) = f \big( \E i {pm} F_0 \, X \otimes E_0 \, \frp Y \otimes v_{p \mu}  \big)  \, .$$
\end{lemma}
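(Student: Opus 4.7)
The plan is to show $f\big([F_0, \E i{pm}] X \otimes E_0 \frp Y \otimes v_{p\mu}\big) = 0$, which is equivalent to the claimed equality. I would first apply the Sweedler-type identity $YX = \sum X_{(1)} \big( \sigma(X_{(2)}) * Y \big)$, valid in the cocommutative Hopf algebra $\barg$, to $X = \E i{pm}$ and $Y = F_0$. Using $\sigma(\E i n) = (-1)^n \E i n$ and $\Delta(\E i{pm}) = \sum_{c+d=pm} \E i c \otimes \E i d$, this yields the expansion
\[
[F_0, \E i{pm}] = \sum_{k=1}^{pm} (-1)^k \E i{pm-k} \cdot \big( \E i k * F_0 \big).
\]
For each summand I would move the leftmost factor $\E i{pm-k} \in \barn \subseteq \barb$ across $f$ via the $\barb$-equivariance (\ref{eq:barb equivariance}). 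Because $\barn^+$ annihilates $v_{p\mu}$, the Sweedler expansion of $\sigma(\E i{pm-k}) * (E_0 \frp Y \otimes v_{p\mu})$ collapses to the single term with second Sweedler factor $1$, and combining with the centrality of $E_0$ in $\barn$ via formula (\ref{eq:useful E0 and F0 facts}), the middle slot becomes $E_0 \cdot (\E i{pm-k} * \frp Y)$.

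Next I would split the sum based on whether $p \mid k$. If $p \nmid k$, then $p \nmid (pm-k)$, and Lucas's theorem applied to the divided-power product $\E i{pk'} \E i{r'} = \binom{pk'+r'}{r'} \E i{pk'+r'}$ gives the factorization $\E i{pm-k} = \frp(\E i{q'}) \cdot \E i{r'}$ in $\barn$ with $r' \in \{1, \ldots, p-1\}$. Using $(AB) * Z = A * (B * Z)$ together with centrality of $E_0$, the middle slot rewrites as $\frp(\E i{q'}) * \bigl(E_0 (\E i{r'} * \frp Y)\bigr)$, which vanishes by Lemma \ref{lem:useful E0 and F0 facts}(2) since $\E i{r'} \in \smalln^+$. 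So these terms contribute zero, and only the contributions from $k = pq$ with $1 \leq q \leq m$ remain. For these, Lemma \ref{lem:useful E0 and F0 facts}(1) together with centrality reduces the middle slot to $E_0 \frp(\E i{m-q} * Y)$, so the problem reduces to showing
\[
\sum_{q=1}^{m} f\big((\E i{pq} * F_0) X \otimes E_0 \frp(\E i{m-q} * Y) \otimes v_{p\mu}\big) = 0.
\]

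The main obstacle is this last sum, which I expect to be the hard step: here the $\E i{pm-k} = \frp(\E i{m-q})$ factor carries no small-hyperalgebra part on which to pivot. My plan is to expand $\E i{pq} * F_0$ using the module-algebra identity $A * (BC) = \sum (A_{(1)} * B)(A_{(2)} * C)$ applied to the product $F_0 = \prod_\beta F_\beta^{(p-1)}$, together with the explicit commutation relations $[E_i^{(a)}, F_\beta^{(b)}]$ in $\barg$. Since $\frp(\E i{m-q}) = \E i{p(m-q)}$ commutes in $\barn$ with every divided power of $E_i$, each resulting contribution can be rearranged so that a small-hyperalgebra factor $\E i r \in \smalln^+$ (built from the $E_i$-part of the various commutators that arise) appears leftmost, after which a final application of $\barb$-equivariance combined with Lemma \ref{lem:useful E0 and F0 facts}(2) annihilates the term, completing the proof.
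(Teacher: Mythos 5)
Your reduction up through the dichotomy on whether $p \mid k$ is correct and essentially mirrors (by hand) the information the paper extracts from \cite{GK10}. The Hopf-algebra identity $YX = \sum X_{(1)}\big(\sigma(X_{(2)})*Y\big)$, the collapse of the Sweedler sum over $v_{p\mu}$, the factorization $\E i{pm-k} = \frp(\E i{q'})\cdot\E i{r'}$ via Lucas, and the disposal of the $p\nmid k$ terms via Lemma \ref{lem:useful E0 and F0 facts}(2) are all sound. However, the ``hard step'' you flag at the end is not merely hard; the argument you sketch for it cannot succeed, and this is precisely the content of the nontrivial commutation result the paper imports.

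The problem is that $\E i{pq}*F_0$ does \emph{not} decompose into terms that can be reorganized so that an element of $\smalln^+$ sits leftmost. Already for $G=SL_2$, expanding $E^{(pq)} * F^{(p-1)} = \sum_j (-1)^{pq-j} E^{(j)} F^{(p-1)} E^{(pq-j)}$ via the divided-power relation $E^{(a)}F^{(b)} = \sum_c F^{(b-c)}\binom{H-a-b+2c}{c}E^{(a-c)}$ produces terms of the form $F^{(p-1-c)}\binom{H-\cdots}{c}E^{(pq-c)}$, which carry Cartan factors $\binom{H-\cdots}{c}\in\smallt$ rather than $\smalln^+$ factors. Pushing $\smalln^+$ to the left and invoking Lemma \ref{lem:useful E0 and F0 facts}(2) does nothing against these Cartan contributions. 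This is exactly why the paper instead quotes \cite{GK10}, Lemme 3.7 (after a Cartan involution), which gives the precise decomposition
\begin{equation*}
F_0 \, \E i{pm} \in \E i{pm} F_0 + \smalln^+\cdot\barg + \sum_{s=0}^{m-1} \E i{sp} z_s\cdot\barg, \qquad z_s\in\smallt,\ c_{-2(p-1)\rho}(z_s)=0.
\end{equation*}
The first error block $\smalln^+\cdot\barg$ dies by the argument you already have. The second block $\E i{sp}z_s\cdot\barg$ requires a genuinely different mechanism: after moving $\E i{sp}z_s$ across $f$, one computes that $\big(\E i{sp}*E_0\frp Y\big)\otimes v_{p\mu}$ has weight congruent to $2(p-1)\rho$ modulo $p\Lambda$, then uses (\ref{eq:smallt and p weights}) to replace $z_s$ by its value $c_{-2(p-1)\rho}(z_s)=0$. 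Your proposal contains no weight argument and no analogue of the vanishing $c_{-2(p-1)\rho}(z_s)=0$, so it has no way to dispose of the Cartan-factor error terms. To repair the proof you would either need to reproduce the content of \cite{GK10}, Lemme 3.7 (or Lemma 4.5 of \cite{KL02}), or supply the missing weight argument for the $\smallt$-factors that inevitably appear when commuting $E_i$-divided powers past $F_i^{(p-1)}$.
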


\begin{proof}
Applying the Cartan involution to Lemme 3.7 in \cite{GK10} (cf also the proof of Lemma 4.5 in \cite{KL02}) we have
\begin{equation} \label{eq:F0 and E i pm}  F_0 \, \E i {pm} \in \E i {pm} F_0 + \smalln^+ \cdot \barg + \sum_{s=0}^{m-1} \E i {sp} z_s \cdot \barg \, , \end{equation}
where $z_s \in \smallt$ are elements such that $ \chi_{ -2(p-1)\rho }(z_s) = 0 $.

Since $$ (X.f)(X' \otimes Y' \otimes v_{p \mu}) = f(X'X \otimes Y' \otimes v_{p \mu}) $$ for all $X, X' \in \barg$ and $Y' \in \barn$, it suffices to show that \begin{equation} \label{eq:F0 and E i pm 2}
f \big( F_0 \, \E i {pm} \otimes E_0 \, \frp Y \otimes v_{p \mu} \big) = f \big( \E i {pm} F_0 \, \otimes E_0 \, \frp Y \otimes v_{p \mu} \big) \, .
\end{equation} By (\ref{eq:F0 and E i pm}) we have
\begin{equation} \label{eq:F0 and E i pm 3} F_0 \, \E i {pm} \otimes E_0 \, \frp Y \otimes v_{p \mu} = \left(  \E i {pm} F_0 + \sum N_j A_j + \sum_{s=0}^{m-1} \E i {sp} z_s B_s  \right) \otimes E_0 \, \frp Y \otimes v_{p \mu} \, \end{equation} for some $A_j, B_s \in \barg$, $N_j \in \smalln^+$, and $z_s \in \smallt$ such that $ \chi_{ -2(p-1)\rho }(z_s) = 0 $.
Now,
\begin{align*}
\sum f(N_j A_j \otimes E_0 \, \frp Y \otimes v_{p \mu}) &= \sum f \Big( A_j \otimes \big( \sigma(N_j) * ( E_0 \, \frp Y \otimes v_{p \mu} ) \big) \Big) \\
&= \sum f \Big( A_j \otimes E_0 \cdot ( \sigma(N_j) * \frp Y ) \otimes v_{p \mu}  \Big) \\
& \quad \tr{(since $\sigma(N_j) * E_0 = 0$ by Lemma \ref{lem: basic E0 and F0 facts} and  $\sigma(N_j).v_{p \mu} = 0$)} \\
&=  0 \; \tr{(by Lemma \ref{lem:useful E0 and F0 facts} (2))} \, .
\end{align*}
Also, 
\begin{align*}
\sum_{s=0}^{m-1}  f \left( \E i {sp} z_s B_i \otimes E_0 \, \frp Y \otimes v_{p \mu}  \right) &= \sum_{s=0}^{m-1} f \left( B_i \otimes \sigma( \E i {sp} z_s )*( E_0 \, \frp Y \otimes v_{p \mu} )  \right) \\
&= \sum_{s=0}^{m-1}  f \left( (-1)^s B_i \otimes \sigma(z_s)* \big( ( \E i {sp} * E_0 \, \frp Y ) \otimes v_{p \mu} \big)  \right) \\
&= \sum_{s=0}^{m-1} f \left( (-1)^s B_i \otimes \big( c_{ -2(p-1)\rho }(z_s) \big). \big( \E i {sp} * E_0 \, \frp Y \big) \otimes v_{p \mu}  \right) \\
&\quad \tr{(by (\ref{eq:smallt and p weights}), since $\big( \E i {sp} * E_0 \, \frp Y \big) \otimes v_{p \mu}$ has weight} \\
& \quad \; \tr{$ 2(p-1) \rho$ mod $p \Lambda$)} \\
&= 0 \quad (\tr{since } c_{ -2(p-1)\rho }(z_s) = 0) \, .
\end{align*} Thus (\ref{eq:F0 and E i pm 2}) holds by (\ref{eq:F0 and E i pm 3}).
\end{proof}

\begin{proposition} \label{pr:well-definedness & properties of spl} The morphism $\spl$ is well-defined and divides weights by $p$ (i.e., if $f$ is a weight vector of weight $\mu$ then $\spl(f)$ is a weight vector of weight $\mu / p$ if $\mu \in p\Lambda$ and $\spl(f) = 0$ otherwise). Furthermore,\begin{equation} \label{eq:well-definedness of spl}
\spl( \varphi Z.f ) = Z. (\spl f) \tr{ for all } Z \in \barg \tr{ and } f \in \ihe n \lambda . 
\end{equation}
In particular, $\spl$ preserves $\barg$-locally finite vectors, so that $ \spl $ restricts to a morphism
\begin{equation}
\ige n \lambda \to \ig n \lambda \, .
\end{equation}
\end{proposition}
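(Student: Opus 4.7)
My plan is to prove the intertwining identity first, since it is essentially a formal computation from the multiplicativity of $\varphi$ and holds as an identity of linear functionals regardless of well-definedness; the weight-dividing statement then follows quickly, and both are used to handle well-definedness. The $\barg$-locally finite preservation is then an easy corollary.

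Unpacking $\spl(\varphi Z.f)(X \otimes Y \otimes v_{n\lambda})$ using the definitions of $\spl$ and of the $\barg$-action $(Z.f)(X') = f(X'Z)$, then invoking multiplicativity of $\varphi$, both $\spl(\varphi Z.f)$ and $Z.\spl f$, evaluated at $(X \otimes Y \otimes v_{n\lambda})$, reduce to $f(F_0 \varphi(XZ) \otimes E_0 \frp Y \otimes v_{pn\lambda})$. For the weight-dividing property, apply the intertwining with $Z = H \in \bart$ to a weight vector $f$ of weight $\mu$: we get $H.\spl f = c_\mu(\varphi H) \spl f$. Lemma \ref{lem:varphi and weights} identifies $c_\mu(\varphi H)$ with $c_{\mu/p}(H)$ when $\mu \in p\Lambda$ and gives $0$ otherwise; taking $H=1$ in the latter case (so $\varphi(1) = \mu_0$) together with (\ref{eq:mu0 and weights}) forces $\spl f = 0$. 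This gives both the weight-dividing statement and the $\bart$-local finiteness of $\spl f$.

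The genuine content is the $\barb$-equivariance (\ref{eq:barb equivariance}) required for $\spl f$ to lie in $\ih n \lambda$. The degree condition is immediate: if $Y \in \barng m$, then $E_0 \cdot \frp Y \in \barng{(p-1)N + pm}$, matching the support of $f \in \ihe n \lambda$ only when $m = n$. I would check the equivariance on generators $A \in \bart$ (straightforward from multiplicativity of $\varphi$ together with $\varphi(H) \in \bart$) and $A = X' \in \barn^+$. The latter is the technical heart: expand $\varphi(X'X) = \frp(X') \varphi(X) \mu_0$ via multiplicativity and $\mu_0$-centrality of $\im \varphi$; iterate Lemma \ref{lem:F0 and E i pm conjugation} to commute $F_0$ past $\frp(X')$, which is a product of $E_i^{(pm)}$'s; use the $\barb$-equivariance of $f$ to transfer $\frp(X')$ onto the second tensor factor, with $\barn^+ . v_{pn\lambda} = 0$ simplifying the $*$-action; then apply Lemma \ref{lem:useful E0 and F0 facts}(1) to rewrite $E_0 \cdot (\sigma \frp X' * \frp Y)$ as $E_0 \cdot \frp(\sigma X' * Y)$. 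A residual $\mu_0$ factor remains, which I dispatch by decomposing $f$ into $\bart$-weight components $f = \sum f_\mu$: on the components of weight $\mu \in p\Lambda$ one has $c_\mu(\mu_0) = 1$ so the residual $\mu_0$ acts as the identity, while on the remaining components $\spl f_\mu = 0$ by the weight-dividing property, making both sides vanish. I expect this bookkeeping of the residual $\mu_0$ to be the main subtlety.

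Finally, the $\barg$-locally finite preservation is immediate from the intertwining: $\barg . \spl f = \{\spl(\varphi Z . f) : Z \in \barg\} \subseteq \spl(\barg . f)$, which is finite-dimensional whenever $\barg . f$ is, so $\spl$ restricts to $\ige n \lambda \to \ig n \lambda$ as claimed.
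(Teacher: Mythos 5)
Your proposal is correct and runs on the same engine as the paper's proof: the well-definedness check of $\barb$-linearity on $\bart$- and $\barn$-generators via Lemma~\ref{lem:F0 and E i pm conjugation} and Lemma~\ref{lem:useful E0 and F0 facts} is exactly the paper's technical core, and the degree and locally-finite bookkeeping match. The one genuine organizational difference is worth noting: you establish the intertwining formula~\eqref{eq:well-definedness of spl} first (it is indeed purely formal, needing only multiplicativity of $\varphi$ and holding at the level of raw linear functionals) and then read off the weight-division from $\varphi(\bart)\subseteq\bart$, Lemma~\ref{lem:varphi and weights}, and~\eqref{eq:mu0 and weights}; the paper instead proves well-definedness first, deduces weight-division from the observation that the dual map $X\otimes Y\otimes v_{n\lambda}\mapsto F_0\,\varphi X\otimes E_0\,\frp Y\otimes v_{pn\lambda}$ multiplies weights by $p$, and leaves the intertwining for last. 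Your ordering is a clean alternative and arguably tidier. Two small corrections of detail: the $\bart$-generator case is not quite immediate from multiplicativity alone---the paper must commute $\binom{H_i}{pm}$ past $F_0$ via Lusztig's relation $F_0\binom{H_i}{pm}=\binom{H_i;2(p-1)}{pm}F_0$ ([L90], 6.5(a6)) and then do a weight count before the two sides match; and the ``residual $\mu_0$'' you budget for in the $\barn$-case does not actually occur, since $\mu_0=\varphi(1)$ is the unit of $\im\varphi$ and hence $\mu_0\,\varphi(X)=\varphi(X)$ for all $X\in\barg$, so the weight-decomposition of $f$ you propose to dispose of it is unnecessary (though harmless).
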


\begin{proof} To see that $\spl$ is well-defined, we need to check (cf (\ref{eq:barb equivariance})) that for $\lambda \in \Lambda$, $X \in \barg$, $ Y \in \barng n $, $Z \in \barb$, and $f \in \ihe n \lambda$,
\begin{equation} \label{eq:well-definedness & properties of spl 1}
(\spl f)(ZX \otimes Y \otimes v_{n \lambda}) = (\spl f) \big( X \otimes \sigma(Z) * ( Y \otimes v_{n \lambda} ) \big) \, . 
\end{equation}
(That is, we need to check that $\spl$ preserves $\barb$-linearity). It suffices to check this for the two cases where $Z = \displaystyle \binom {H_i} m$ or $Z = \E i m$ for some $1 \leq i \leq \ell$ and $m \geq 0$.

For the first case, set $Z = \displaystyle \binom {H_i} m$. For $1 \leq i \leq \ell$, $m \geq 0$, and $n \in \Z$ define
\begin{equation}
\binom{ H_i; n } m := \frac{ ( H_i + n ) (H_i + n - 1) \cdots ( H_i + n - m + 1 ) }{m!} \in \bart \,.
\end{equation}
We may assume in (\ref{eq:well-definedness & properties of spl 1}) that $ Y $ is a weight vector of weight $\mu$. Then we have
\begin{align*} (\spl f) \left( \binom{H_i} m X \otimes Y \otimes v_{n\lambda} \right) &= f \left( F_0 \cdot \varphi \left( \binom{H_i} m X \right) \otimes E_0 \cdot \frp Y \otimes v_{pn \lambda}  \right) \\
&= f \left( F_0 \cdot \binom{H_i}{pm} \cdot \varphi(X) \otimes E_0 \cdot \frp Y \otimes v_{pn \lambda}  \right) \\
&= f \left( \binom{H_i ; 2(p-1)}{pm} \cdot F_0 \cdot \varphi(X) \otimes E_0 \cdot \frp Y \otimes v_{pn \lambda}  \right) \\
& \quad \tr{(by \cite{L90}, 6.5(a6))} \\
&= f \left( F_0 \cdot \varphi(X) \otimes \sigma \binom{H_i ; 2(p-1)}{pm} * \Big( E_0 \cdot \frp Y \otimes v_{pn \lambda} \Big)  \right) \\
&= f \left( F_0 \cdot \varphi(X) \otimes \binom{-H_i ; 2(p-1)}{pm} * \Big( E_0 \cdot \frp Y \otimes v_{pn \lambda} \Big)  \right) \\
&= f \Bigg( F_0 \cdot \varphi(X) \otimes \binom{ -\big( 2(p-1) \rho + p \mu + pn \lambda \big)(\alpha_i^\vee) + 2(p-1) }{pm} \cdot  \\
& \quad \Big( E_0 \cdot \frp Y \otimes v_{pn \lambda} \Big) \Bigg) \\
& \quad \tr{(since $E_0 \cdot \frp Y \otimes v_{pn \lambda}$ has weight $2(p-1) \rho + p \mu + pn \lambda$)} \\
&= f \Bigg( F_0 \cdot \varphi(X) \otimes \binom{ -( p \mu + pn \lambda )(\alpha_i^\vee) }{pm} \cdot \Big( E_0 \cdot \frp Y \otimes v_{pn \lambda} \Big) \Bigg) \\
&= f \left( F_0 \cdot \varphi(X) \otimes \binom{ -( \mu + n \lambda )(\alpha_i^\vee) } m \cdot \Big( E_0 \cdot \frp Y \otimes v_{pn \lambda} \Big)  \right) \\
&= (\spl f) \left( X \otimes \binom{ -( \mu + n \lambda )(\alpha_i^\vee) } m \cdot Y \otimes v_{n \lambda}  \right)   \\
&= (\spl f) \left( X \otimes \sigma \binom{ H_i } m * (Y \otimes v_{n \lambda})  \right) \\
& \quad \tr{(since $Y \otimes v_{n \lambda}$ has weight $ \mu + n \lambda $).}
\end{align*}

For the second case, set $Z = \E i m$. Then
\begin{align*}
(\spl f) \left( \E i m X \otimes Y \otimes v_{n \lambda} \right) &= f \Big(  F_0 \cdot \varphi( \E i m X) \otimes   E_0 \, \frp Y \otimes v_{pn \lambda}  \Big) \\
&= f \Big( F_0 \, \E i {pm} \, \varphi(X) \otimes E_0 \, \frp Y \otimes v_{pn \lambda}  \Big) \\
&= f \Big( \E i {pm} F_0 \, \, \varphi(X) \otimes E_0 \, \frp Y \otimes v_{pn \lambda}  \Big) \\
& \quad \tr{(by Lemma \ref{lem:F0 and E i pm conjugation})} \\
&= f \Big(  F_0 \, \, \varphi(X) \otimes \sigma( \E i {pm} ) * ( E_0 \, \frp Y \otimes v_{pn \lambda} )  \Big) \\
&= f \Big( (-1)^m F_0 \, \, \varphi(X) \otimes E_0 \cdot \big( \E i {pm} * \frp YÊ\big) \otimes v_{pn \lambda}  \Big) \\
& \quad \tr{(by Lemma \ref{lem: basic E0 and F0 facts})} \\
&= f \Big( (-1)^m F_0 \, \, \varphi(X) \otimes E_0 \cdot \frp \big( \E i m * YÊ\big) \otimes v_{pn \lambda}  \Big) \\
& \quad \tr{(by Lemma \ref{lem:useful E0 and F0 facts} (1))} \\
&= (\spl f) \Big( X \otimes \big( \sigma( \E i m ) * Y \big) \otimes v_{n \lambda} \Big) \\
&= (\spl f) \Big(  X \otimes \sigma( \E i m )*( Y \otimes v_{n \lambda} )  \Big) \, .
\end{align*}
Hence $\splv$ is well-defined.

Note that the morphism
\begin{align*} X \otimes Y \otimes v_{n\lambda} & \mapsto F_0 \cdot \varphi X \otimes E_0 \cdot \frp Y \otimes v_{pn\lambda}
\end{align*} is the morphism dual to $\spl$. Since this morphism clearly multiplies weights by $p$, $\spl$ divides weights by $p$. Finally, (\ref{eq:well-definedness of spl}) follows from (\ref{eq:barg action}) and an easy computation.
\end{proof}

\subsubsection{Frobenius-linearity of $\spl$}

Note that by the formulas in \S\ref{subsub:background 1} we have
\begin{equation} \label{eq:fr on smallg}
\fr(X) = \epsilon(X) \tr{ for all } X \in \smallg \, .
\end{equation}

\begin{subequations}
\begin{lemma} \label{lem:commutative diagram for Frobenius-linearity} The following diagrams commute:
\begin{equation} \label{eq:commutative diagram for Frobenius-linearity 1} \xymatrix{
\barg \otimes \barg \; \; \; & \ar[l]_{ \id \otimes \varphi} \; \; \; \barg \otimes \barg & \\
&& \barg \ar[ul]_\Delta \ar[dl]^\varphi \\
\barg \otimes \barg \ar[uu]^{ \fr \otimes \id } & \ar[l]^{ \hspace{.3in} \Delta } \barg \mu_0 &
}
\end{equation} and
\begin{equation} \label{eq:commutative diagram for Frobenius-linearity 2} \xymatrix{
\barn \otimes \barn  \; \; \; & \ar[l]_{ \id \otimes \frp} \; \; \; \barn \otimes \barn & \\
&& \barn \ar[ul]_\Delta \ar[dl]^\frp \\
\barn \otimes \barn \ar[uu]^{ \fr \otimes \id } & \ar[l]^{ \hspace{.3in} \Delta } \barn  &
}
\end{equation}
\end{lemma}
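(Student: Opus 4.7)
The plan is to verify both commutative diagrams by reducing each identity to a computation on algebra generators. All of the maps appearing in the two squares---$\Delta$, $\fr$, $\frp$, and (as a morphism of unital $\fp$-algebras with image $\barg\mu_0$) $\varphi$---are multiplicative, so the two compositions in each square are multiplicative maps out of the common source, and it suffices to check the identities on the divided-power generators $E_i^{(n)}$ of $\barn$ and the divided-power generators $E_i^{(n)}, F_i^{(n)}, \binom{H_i}{n}$ of $\barg$. Diagram (\ref{eq:commutative diagram for Frobenius-linearity 2}) then follows from a short computation: using $\Delta(E_i^{(n)}) = \sum_{a+b=n} E_i^{(a)} \otimes E_i^{(b)}$ and $\frp(E_i^{(n)}) = E_i^{(pn)}$, the left-hand side is $\sum_{a+b=n} E_i^{(a)} \otimes E_i^{(pb)}$, while the right-hand side starts from $\Delta(E_i^{(pn)}) = \sum_{c+d=pn} E_i^{(c)} \otimes E_i^{(d)}$ and the formula for $\fr$ kills every term with $p \nmid c$, leaving the same expression. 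Parallel computations with $F_i^{(n)}$ and $\binom{H_i}{n}$ give the corresponding statements for $\frpm$ and $\frpt$.

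For diagram (\ref{eq:commutative diagram for Frobenius-linearity 1}) I would reduce to the three identities above combined with the single additional identity
\begin{equation*}
(\fr \otimes \id)(\Delta(\mu_0)) \;=\; 1 \otimes \mu_0.
\end{equation*}
Given a generator $X$ lying in $\barn$, $\barnm$, or $\bart$, write $\varphi(X)$ respectively as $\frp(X)\mu_0$, $\frpm(X)\mu_0$, or $\frpt(X)\mu_0$, and observe that $\mu_0$ commutes with the leading factor: that factor is a weight vector of weight in $p\Lambda$, and $\mu_0 \in \bart$ commutes with any such vector via the identity $H_i Z = Z(H_i + \nu(\alpha_i^\vee))$ for a weight vector $Z$ of weight $\nu$, combined with $\nu(\alpha_i^\vee) \equiv 0 \pmod p$ when $\nu \in p\Lambda$. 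Since $\fr \otimes \id$ is an algebra morphism we factor
\begin{equation*}
(\fr \otimes \id)(\Delta(\varphi X)) \;=\; (\fr \otimes \id)(\Delta(\frp X)) \cdot (\fr \otimes \id)(\Delta(\mu_0)),
\end{equation*}
and the first factor equals $\sum X_{(1)} \otimes \frp(X_{(2)})$ by diagram (\ref{eq:commutative diagram for Frobenius-linearity 2}) (or its analogue). Combined with the $\mu_0$ identity this yields $\sum X_{(1)} \otimes \frp(X_{(2)})\mu_0 = \sum X_{(1)} \otimes \varphi(X_{(2)})$, as required.

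The main obstacle is establishing the $\mu_0$ identity itself. The plan is to verify it by pairing both sides against the characters $c_\lambda \otimes c_\mu$ of $\bart \otimes \bart$, which separate elements of this coalgebra. Using the explicit formula $\fr\binom{H_i}{n} = \binom{H_i}{n/p}$ when $p \mid n$ (and zero otherwise) together with Lucas' theorem one verifies that the restriction $c_\lambda \circ \fr|_\bart$ agrees with $c_{p\lambda}$, so
\begin{equation*}
(c_\lambda \otimes c_\mu)\big((\fr \otimes \id)\Delta(\mu_0)\big) \;=\; c_{p\lambda + \mu}(\mu_0) \;=\; \delta_{\mu \in p\Lambda}
\end{equation*}
by (\ref{eq:mu0 and weights}), and this agrees with $(c_\lambda \otimes c_\mu)(1 \otimes \mu_0) = c_\mu(\mu_0)$, completing the argument.
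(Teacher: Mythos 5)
Your proposal is correct, and for the most part tracks the paper's argument closely: both reduce each diagram to the divided-power generators via multiplicativity, both observe that diagram~(\ref{eq:commutative diagram for Frobenius-linearity 2}) follows from the defining formulas for $\fr$ and $\frp$, and both recognize that diagram~(\ref{eq:commutative diagram for Frobenius-linearity 1}) reduces to diagram~(\ref{eq:commutative diagram for Frobenius-linearity 2}) plus the single identity $(\fr \otimes \id)(\Delta\mu_0) = 1 \otimes \mu_0$.

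Where you diverge is in the verification of that $\mu_0$-identity. The paper gets it essentially for free from equation~(\ref{eq:fr on smallg}), $\fr(X) = \epsilon(X)$ for $X \in \smallg$, which is recorded immediately before the lemma: since $\mu_0 = \prod_i(1-H_i^{p-1}) \in \smallt$ and $\Delta(\smallt) \subseteq \smallt \otimes \bart$, the counit axiom gives $\sum \fr\big((\mu_0)_{(1)}\big) \otimes (\mu_0)_{(2)} = 1 \otimes \sum \epsilon\big((\mu_0)_{(1)}\big)(\mu_0)_{(2)} = 1 \otimes \mu_0$. Your character-pairing argument---showing $c_\lambda \circ \fr|_\bart = c_{p\lambda}$ via Lucas and then invoking (\ref{eq:mu0 and weights})---is also valid, but it relies on the characters $\{c_\lambda\}_{\lambda \in \Lambda}$ separating points of $\bart$ (and hence $c_\lambda \otimes c_\mu$ separating $\bart \otimes \bart$), a true fact that you use silently and would want to justify. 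The paper's route is shorter and reuses a lemma needed again in Proposition~\ref{pr:Frobenius-linearity of spl}; yours is a reasonable alternative that stays entirely at the level of weight characters.

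One small remark: your aside about $\mu_0$ commuting with $\frp(X)$ (resp.\ $\frpm$, $\frpt$) is not actually needed in the factorization you then carry out, since $\Delta$ is an algebra morphism and $\varphi(X) = \frp(X)\,\mu_0$ already has $\mu_0$ on the right; you apply $\Delta$ to this product and never need to reorder it.
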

\end{subequations}

\begin{proof} This is implicit in \cite{GK10} and \cite{KL02}, but we verify it directly for completeness. We first verify (\ref{eq:commutative diagram for Frobenius-linearity 1}). Since all morphisms in the diagram are multiplicative, it suffices to verify that the diagram commutes for the algebra generators $\big \{ \E i m \big \}_{m \geq 0}$, $\big \{ \FF i m \big \}_{m \geq 0}$, and $\left \{   \binom{H_i} m \right \}_{m \geq 0}$ of $\barg$. We verify this for $\E i m$:
\begin{align*} \big(  ( \fr \otimes \id ) \circ \Delta \circ \varphi  \big) \big( \E i m \big) &= \big(  ( \fr \otimes \id ) \circ \Delta \big)( \E i {pm} \mu_0 ) \\
&= (\fr \otimes \id) \bigg[ \sum_{j=0}^{pm} ( \E i j \otimes \E i {pm - j} ) \cdot \sum (\mu_0)_{(1)} \otimes (\mu_0)_{(2)} \bigg] \\
&= \sum_{j=0}^m \E i j \otimes \E i {pm - pj} \cdot \sum \fr \big( (\mu_0)_{(1)} \big) \otimes (\mu_0)_{(2)} \\
& \quad \tr{(by (\ref{eq:fr on smallg}))} \\
&= \left( \sum_{j=0}^m \E i j \otimes \E i {pm - pj} \right) \cdot ( 1 \otimes \mu_0 ) \\
&= \sum_{j=0}^m \E i j \otimes \varphi( \E i{m-j} ) \\
&= \big( (\id \otimes \varphi) \circ \Delta ) \big)( \E i m ) \, .
\end{align*} The computations for $\FF i m$ and $\binom{H_i} m$ are similar, as is the computation for (\ref{eq:commutative diagram for Frobenius-linearity 2}).
\end{proof}

\begin{proposition} \label{pr:Frobenius-linearity of spl} $\spl(f^p g) = f \cdot \spl(g)$ for all $n, m \geq 0$, $f \in \ig n \lambda$, and $g \in \ige m \lambda$.
\end{proposition}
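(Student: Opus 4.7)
The plan is to reduce the identity to a combinatorial manipulation in Sweedler notation, using the commutative diagrams of Lemma \ref{lem:commutative diagram for Frobenius-linearity} and the fact that $f^p$ can be computed via $\frtstar$.

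First, I would replace $f^p$ by $\frtstar f$ using Proposition \ref{pr:frtstar is the pth power morphism}, so that $f^p \in \ih{pn}\lambda$ acts by $(f^p)(A \otimes B \otimes v_{pn\lambda}) = f(\fr A \otimes \fr B \otimes v_{n\lambda})$. Then, unwinding definitions, for $X \in \barg$ and $Y \in \barng{n+m}$,
\begin{align*}
\spl(f^p g)(X \otimes Y \otimes v_{(n+m)\lambda})
 &= (f^p g)\big(F_0\, \varphi X \otimes E_0\, \frp Y \otimes v_{p(n+m)\lambda}\big) \\
 &= \sum f\big(\fr(\sw A 1) \otimes \fr(\sw B 1) \otimes v_{n\lambda}\big) \cdot g\big(\sw A 2 \otimes \sw B 2 \otimes v_{pm\lambda}\big),
\end{align*}
where $A = F_0 \varphi X \in \barg$ and $B = E_0 \frp Y \in \barn$, and the Sweedler sum is taken separately for $A$ and $B$ using the multiplication rule (\ref{eq:multiplication in ih}).

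The heart of the argument is then to compute $(\fr \otimes \id)\Delta A$ and $(\fr \otimes \id)\Delta B$. For the first, I would write $\Delta(F_0 \varphi X) = \Delta(F_0)\Delta(\varphi X)$, apply $\fr \otimes \id$, and use that $F_0 \in \smallnm$ together with the fact (equation (\ref{eq:fr on smallg})) that $\fr$ agrees with the counit $\epsilon$ on $\smallg$ to collapse the $F_0$ factors by the counit axiom. Then I would invoke diagram (\ref{eq:commutative diagram for Frobenius-linearity 1}) to recognize what remains as $\sum \sw X 1 \otimes F_0\,\varphi(\sw X 2)$. An analogous computation using $E_0 \in \smalln$ and diagram (\ref{eq:commutative diagram for Frobenius-linearity 2}) gives $(\fr \otimes \id)\Delta B = \sum \sw Y 1 \otimes E_0\,\frp(\sw Y 2)$.

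Substituting these two identities back into the Sweedler expression above yields
\begin{align*}
\spl(f^p g)(X \otimes Y \otimes v_{(n+m)\lambda})
 &= \sum f\big(\sw X 1 \otimes \sw Y 1 \otimes v_{n\lambda}\big) \cdot g\big(F_0\,\varphi(\sw X 2) \otimes E_0\,\frp(\sw Y 2) \otimes v_{pm\lambda}\big) \\
 &= \sum f\big(\sw X 1 \otimes \sw Y 1 \otimes v_{n\lambda}\big) \cdot (\spl g)\big(\sw X 2 \otimes \sw Y 2 \otimes v_{m\lambda}\big) \\
 &= (f \cdot \spl g)(X \otimes Y \otimes v_{(n+m)\lambda}),
\end{align*}
which is the claim. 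I also need to note that the degrees match: $f^p g$ has total $\barnd$-degree $pn + (p-1)N + pm = (p-1)N + p(n+m)$, so $f^p g \in \ihe{n+m}\lambda$ and $\spl$ may be applied. The main obstacle is the careful bookkeeping in the first step of the central computation, namely justifying that $F_0$ (respectively $E_0$) can be pulled out of the coproduct past $\varphi X$ (respectively $\frp Y$) using only the counit axiom together with the equality $\fr|_{\smallg} = \epsilon|_{\smallg}$; once this is set up, diagrams (\ref{eq:commutative diagram for Frobenius-linearity 1}) and (\ref{eq:commutative diagram for Frobenius-linearity 2}) do the rest of the work.
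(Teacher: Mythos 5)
Your proposal is correct and takes essentially the same approach as the paper: both arguments substitute $\frtstar f$ for $f^p$ via Proposition \ref{pr:frtstar is the pth power morphism}, expand by the multiplication rule (\ref{eq:multiplication in ih}), collapse the $(F_0)_{(1)}$ and $(E_0)_{(1)}$ factors using $\fr|_{\smallg}=\epsilon$ (equation (\ref{eq:fr on smallg})) and the counit axiom, and then apply the two commutative diagrams of Lemma \ref{lem:commutative diagram for Frobenius-linearity} to recognize the remaining Sweedler sum as $f\cdot \spl(g)$. The only cosmetic difference is that you package the central step as the computation of $(\fr\otimes\id)\Delta(F_0\,\varphi X)$ and $(\fr\otimes\id)\Delta(E_0\,\frp Y)$, which is a clean way of saying exactly what the paper does inline.
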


\begin{proof} Choose $X \in \barg$ and $Y \in \barng{n+m}$. Then
\begin{align*}
\spl(f^p g)(X \otimes Y \otimes v_{(n+m)\lambda}) &= (f^p g) \big( F_0 \cdot \varphi X \otimes E_0 \cdot \frp Y \otimes v_{p(n+m)\lambda} \big) \\
&= ( \frtstar f \cdot g )\big( F_0 \cdot \varphi X \otimes E_0 \cdot \frp Y \otimes v_{p(n+m)\lambda} \big) \\
& \quad \tr{(by Proposition \ref{pr:frtstar is the pth power morphism})} \\
&= \sum f \Big[  \fr \big(  (F_0)_{(1)} ( \varphi X )_{(1)} \big) \otimes \fr \big(  (E_0)_{(1)} ( \frp Y )_{(1)} \big)\otimes v_{n \lambda}  \Big] \cdot \\
& \quad g \Big[  (F_0)_{(2)} (\varphi X)_{(2)} \otimes (E_0)_{(2)} (\frp Y)_{(2)} \otimes v_{pm \lambda}  \Big] \\
& \quad \tr{(by (\ref{eq:multiplication in ih}))} \\
&= \sum f \Big[  \fr \big(  ( \varphi X )_{(1)} \big) \otimes \fr \big(  ( \frp Y )_{(1)} \big) \otimes v_{n\lambda} \Big] \cdot \\
& \quad g \Big[  F_0 \cdot (\varphi X)_{(2)} \otimes E_0 \cdot (\frp Y)_{(2)} \otimes v_{pm\lambda} \Big] \\
& \quad \tr{(by (\ref{eq:fr on smallg}))} \\
&= \sum f \big(  \sw X 1 \otimes \sw Y 1 \otimes v_{n\lambda}  \big) \cdot g \big( F_0 \cdot \varphi X \otimes E_0 \cdot \frp Y \otimes v_{pm\lambda}  \big) \\
& \quad \tr{(by Lemma \ref{lem:commutative diagram for Frobenius-linearity})} \\
&= \big( f \cdot \spl (g) \big)( X \otimes Y \otimes v_{(n+m)\lambda} ) \, .
\end{align*}
\end{proof}

\subsection{The section $\psi_{f_+ \otimes f_-}$ and the multiplication $\mulf$} \label{sub:mul}

In this section we construct a particular section $ \psi_{f_+ \otimes f_-} \in \Indg{ \barnd } $ and define the multiplication morphism $\mulf : f \mapsto \psi_{f_+ \otimes f_-} \cdot f  $.

\subsubsection{The morphism $\bpsi$}

Set $\delta := (p-1)\rho$. Recall that the \tb{Steinberg module} for $G$, denoted $\st$, is the irreducible module of highest weight $ \delta  $. It is also a Weyl module for $G$ and is self-dual. Let
\begin{equation}
\eta : \stst \to \fp
\end{equation}
be the $G$-equivariant pairing.

Recall that we are taking the conjugation action $*$ of $\barb$ on $\barnd$. Following \cite{KLT}, define a morphism
\begin{subequations}
\begin{equation} \label{eq:bpsi}
\bpsi : \stst \to \barnd, \quad v \otimes w \mapsto \bpsi_{v \otimes w}
\end{equation}
by \begin{equation}
\bpsi_{v \otimes w}(X) = \eta( v \otimes X.w ) 
\end{equation}
\end{subequations}
for $v \otimes w \in \stst$ and $X \in \barn$. Since
\begin{equation*}
\eta( Y.v \otimes w ) = \eta( v \otimes \sigma Y.w ) \tr{ for all } v, w \in \st \tr{ and } Y \in \barg
\end{equation*}
it is easy to check that $\bpsi$ is a $\barb$-equivariant morphism.

Let
\begin{equation*}
q_{(p-1)N} : \Indg{ \barnd } \twoheadrightarrow \Indg{ \barndg{(p-1)N} } 
\end{equation*}
be the $\barg$-equivariant projection. We now define a $\barg$-equivariant morphism
\begin{equation}
\psi : \stst \to \Indg{ \barnde }, \quad v \otimes w \mapsto \psi_{v \otimes w}
\end{equation}
by the following composition:
\begin{equation} \label{eq:definition of psi}
\stst \stackrel {H^0(\bpsi)} \longrightarrow \Indg{ \barnd } \stackrel { q_{(p-1)N} } \twoheadrightarrow \Indg{ \barnde } \, .
\end{equation}

Let $\pi_{(p-1)N} : \barn \twoheadrightarrow \barne$ be the $\barb$-equivariant projection. Then, considering $\Indg{ \barnde }$ as a subspace of $ \Indg{ \barnd } $, $\psi$ is given explicitly by
\begin{equation} \label{eq:psi explicitly}
\psi_{v \otimes w}( X \otimes Y ) = \sum \eta \big( \sw X 1.v \otimes \pi_{(p-1)N}(Y). \sw X 2.w \big)
\end{equation}
for $X \in \barg$ and $Y \in \barn$. (Remark that the projections $ q_{(p-1)N} $ and $\pi_{(p-1)N}$ are necessary here because in general $\psi_{v \otimes w}$ will not be a homogeneous element of $\Indg{ \barnd }$).

\begin{lemma} \label{lem:pi and E0}
$E_0 \in \barne$.
\end{lemma}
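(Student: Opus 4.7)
The plan is to exploit the fact that $E_0$ lies in the small hyperalgebra $\smalln$: since $\E\beta{p-1} \in \smalln$ for every $\beta \in \Delta^+$, the product $E_0 = \prod_\beta \E\beta{p-1}$ lies in $\smalln$. I will show that the grading $\barng\bullet$ on $\barn$ restricts compatibly to $\smalln$ and that in this restricted grading the $T$-weight $2(p-1)\rho$ occurs only in degree $(p-1)N$. Since $E_0$ is a weight vector of weight $2(p-1)\rho$, this will suffice.

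First, I will verify the grading restriction. The inclusion $\smalln \hookrightarrow \barn$ is dual to a surjection $\barnd \twoheadrightarrow \smalln^*$ of $\barb$-module algebras. Under the $\barb$-equivariant identification $\barnd \cong \symnd$ from \S\ref{subsub:Springer isom}, this surjection becomes $\symnd \twoheadrightarrow \symnd/(x_\beta^p : \beta \in \Delta^+)$, where $\{x_\beta\}$ denotes the root basis of $\n^*$. Because the defining ideal is generated by homogeneous elements (of degree $p$), the surjection is a graded morphism, and dually the grading $\barng\bullet$ restricts to a grading on $\smalln$ that matches the standard dual grading on the truncated polynomial algebra $\symnd/(x_\beta^p)$.

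Next, I will analyze the weight decomposition of this quotient. A monomial basis of $\symnd/(x_\beta^p)$ is given by $\{\prod_\beta x_\beta^{i_\beta} : 0 \leq i_\beta < p\}$, and $\prod_\beta x_\beta^{i_\beta}$ has $T$-weight $-\sum_\beta i_\beta\beta$ and polynomial degree $\sum_\beta i_\beta$. Requiring this weight to equal $-2(p-1)\rho = -(p-1)\sum_\beta \beta$ rearranges to $\sum_\beta (p-1-i_\beta)\beta = 0$ with each $p-1-i_\beta \geq 0$. Since any non-negative integer combination of positive roots that sums to zero must be trivial, this forces $i_\beta = p-1$ for all $\beta$. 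Hence the weight-$(-2(p-1)\rho)$ subspace of $\symnd/(x_\beta^p)$ is one-dimensional and lies in degree $(p-1)N$; dually, the weight-$2(p-1)\rho$ subspace of $\smalln$ is one-dimensional and lies in degree $(p-1)N$. Since $E_0$ is a nonzero element of that weight space, we conclude $E_0 \in \barng{(p-1)N} = \barne$.

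The step that requires the most care is the grading restriction: formally it reduces to the homogeneity of the ideal $(x_\beta^p : \beta \in \Delta^+) \subseteq \symnd$, which is automatic from the degree-$1$ grading on the root coordinates — a fact built into the definition of the grading $\barndg\bullet$ via $\symnd$. Once this is in hand, the rest is the clean combinatorial weight argument above.
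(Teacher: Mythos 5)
Your argument is correct, and it takes a cleaner, more structural route than the paper's proof. Both arguments hinge on the same identification of $\smalln^\vee$ with the truncated polynomial algebra $\symnd / \langle y_\beta^p \rangle$, but they differ in how they extract the conclusion. The paper works dually and computes pairings: it checks $y_0(E_0)\neq 0$ for the top monomial $y_0=\prod y_\beta^{p-1}$, and then verifies $y(E_0)=0$ for every other monomial $y=\prod y_\beta^{m_\beta}$ — including those with some $m_\beta\geq p$, which require a separate Frobenius computation using $\fr$ killing $\smalln^+$. You instead observe at the outset that $E_0\in\smalln$, note that $\smalln$ is a graded $T$-stable subspace of $\barn$ (because the annihilating ideal $\langle y_\beta^p\rangle$ is homogeneous), and then reduce to the elementary fact that the weight $2(p-1)\rho$ occurs in $\smalln$ only in degree $(p-1)N$. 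This sidesteps the pairing computation with monomials of exponent $\geq p$ entirely: those monomials die in the quotient $\barnd\twoheadrightarrow\smalln^\vee$ and never need to be examined. In exchange, the paper's computation produces the explicit fact $\pi_{(p-1)N}(E_0)\neq 0$ directly, which is then reused in (\ref{eq:x0, y0, F0, and E0}); with your argument that fact follows only a posteriori from $E_0\neq 0$ and $E_0\in\barng{(p-1)N}$. Both proofs are correct; yours packages the combinatorics more transparently. One small presentational point: you should make explicit that the grading $\barng\bullet$ is $T$-equivariant (as stated in the paper), which is what lets you conclude that the homogeneous components of a weight vector are again weight vectors of the same weight — you use this implicitly when passing from "the weight-$2\delta$ subspace of $\smalln$ is concentrated in degree $(p-1)N$" to "$E_0\in\barng{(p-1)N}$."
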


\begin{proof}
Let $ \{ y_\beta \}_{\beta \in \Delta^+} \subseteq \barndg 1 $  be a set of weight elements of $\barnd$ that generate $\barnd$ as an $\fp$-algebra such that the weight of $y_\beta$ is $-\beta$. The ideal $\dpow Ip := \langle y_\beta^p \rangle_{\beta \in \Delta^+} $ is $\barb$-stable and the quotient algebra $ \barnd / \dpow Ip \cong \smalln^\vee $ is a $\barb$-module algebra isomorphic to the coordinate algebra of the first Frobenius kernel of $U$.

Set \begin{equation}
y_0 := \prod_{\beta \in \Delta^+} y_\beta^{p-1} \in \barnde 
\end{equation}
and let
\begin{subequations}
\begin{equation}
r : \barnd \twoheadrightarrow \smalln^\vee \twoheadrightarrow \chi_{-2 \delta} 
\end{equation}
be the $\barb$-equivariant projection dual to the morphism
\begin{equation}
\chi_{2\delta} \hookrightarrow \smalln \hookrightarrow \barn, \quad v_{2\delta} \mapsto E_0 \, .
\end{equation}
Since $r(y_0) \neq 0$ we have $y_0(E_0) \neq 0$. Hence $\pi_{(p-1)N}(E_0) \neq 0$ since $y_0 \in \barnde$.
\end{subequations}

Choose nonnegative integers $\{ m_\beta \}_{\beta \in \Delta^+}$ such that not all $m_\beta$ are equal to $ p-1 $ and set $y := \prod_{\beta \in \Delta^+} y_\beta^{m_\beta}$. To show that $E_0 \in \barne$ it suffices to show that $y(E_0) = 0$, since this would imply that $E_0$ is dual to the element $y_0$ with respect to a basis of $\barnd$ consisting of homogeneous elements.

If $y$ is not of weight $ -2\delta $ then $y(E_0) = 0$ by weight considerations, so we can assume that $y$ is of weight $ -2\delta $. Thus we have $ \sum_{\beta \in \Delta^+} m_\beta \beta = 2\delta $. Since not all $m_\beta$ are equal to $p-1$, at least one of the $m_\beta$ must be $\geq p$. (Indeed, otherwise there would be an element of $\smalln$ of weight $2\delta$ that is not in the subspace spanned by $E_0$, which is false). Thus we can write $  y = y_\gamma^p \cdot y'  $ for some $\gamma \in \Delta^+$ and we have
\begin{align*}
y(E_0) &= (y^p_\gamma \cdot y')(E_0) \\
&= ( \frstar y_\gamma \cdot y' )(E_0) \\
&= \sum y_\gamma \Big( \fr \big( (E_0)_{(1)} \big) \Big) \cdot y' \big( (E_0)_{(2)} \big) \\
&= y_\gamma(1) \cdot y'( E_0 ) \\
&= 0 \quad \tr{(since $y_\gamma(1) = 0$).}
\end{align*}
Hence $E_0 \in \barne$.
\end{proof}

In particular, we have
\begin{equation}
\pi_{(p-1)N}(E_0) = E_0 \,. 
\end{equation}

\subsubsection{The section $\psi_{f_+ \otimes f_-}$ and the multiplication $\mulf$}

Let $f_+, f_- \in \st$ be nonzero highest and lowest weight vectors, respectively. Then $F_0.f_+$ is a nonzero multiple of $f_-$ and $E_0.f_-$ is a nonzero multiple of $f_+$ (cf Exercise 2.3.E(2) in \cite{BK}).

By (\ref{eq:psi explicitly}), for $X \in \barnm$ and $Y \in \barn$ we have
\begin{equation} \label{eq:psi f+ f- explicitly}
\psi_{f_+ \otimes f_-}( X \otimes Y ) = \eta \big( X.f_+ \otimes \pi_{(p-1)N}(Y).f_- \big) \, .
\end{equation}
Thus, by rescaling $f_+$ and $f_-$ if necessary, by Lemma \ref{lem:pi and E0} we have
\begin{equation} \label{eq:eta, f+, and f-}
\psi_{f_+ \otimes f_-}( F_0 \otimes E_0 ) = \eta( F_0.f_+ \otimes \pi_{(p-1)N} (E_0).f_- ) = \eta( F_0.f_+ \otimes E_0.f_- ) = 1 \,.
\end{equation}

For all $ \lambda \in \Lambda $ and $n \geq 0$ define a morphism
\begin{equation}
\mulf : \ig n \lambda \to \Indg{ \barndg{n + (p-1)N} \otimes \chi_{ -n\lambda } }
\end{equation}
given by multiplication by the section $\psi_{f_+ \otimes f_-}$. Note that $\mulf$ is $\bart$-equivariant since $f_+ \otimes f_- \in \stst$ is an element of weight $0$.

\subsection{The splitting $ \tot $} \label{sub:tot}

\subsubsection{}
\begin{subequations}
Define an endomorphism $\tot$ of $\Rh \lambda$ as follows. Set
\begin{equation}
\tot \Big( \ih m \lambda \Big) = 0 \tr{ if } p \nmid m
\end{equation}
and for $n \geq 0$ let $\tot$ be defined on $\ih{pn}\lambda$ by the composition
\begin{align}
\ih {pn} \lambda & \stackrel \mulf \longrightarrow \ihe n \lambda \\
\nonumber & \; \; \stackrel \spl \to \ih n \lambda \, .
\end{align}
\end{subequations}
By Proposition \ref{pr:well-definedness & properties of spl}, $\tot$ descends to a morphism $\R \lambda \to \R \lambda$.

\begin{definition} \label{def:Frobenius splittings of algebras} Let $A$ be an $\fp$-algebra and $s$ an $\fp$-linear endomorphism of $A$. We say that $s$ is \textbf{Frobenius linear} if $s(a^p b) = a \cdot s(b)$ for all $a, \, b \in A$. If $s$ is a Frobenius linear endomorphism of $ A $ such that $ s(a^p) = a $ for all $a \in A$ we say that $s$ is a \tb{Frobenius splitting} of $A$.
\end{definition}

\begin{theorem} \label{th:tot is a splitting} $\tot$ is a Frobenius splitting of $\Rh \lambda$ for all $\lambda \in \Lambda$. In particular, $\tot$ descends to a Frobenius splitting of $\R \lambda$.
\end{theorem}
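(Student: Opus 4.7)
My strategy is to first verify Frobenius linearity of $\tot$ formally, and then to reduce the splitting identity $\tot(f^p) = f$ to the single normalization $\tot(1) = 1$ on the unit of $\Rh\lambda$.

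For Frobenius linearity, given $f, g \in \Rh\lambda$, commutativity of $\Rh\lambda$ combined with Proposition~\ref{pr:Frobenius-linearity of spl} (whose proof is purely formal and extends verbatim to all of $\Rh\lambda$, since local finiteness of $f$ is not actually used in the calculation) yields
\[
\tot(f^p g) = \spl\bigl(\psi_{f_+ \otimes f_-} \cdot f^p g\bigr) = \spl\bigl(f^p \cdot \psi_{f_+ \otimes f_-} g\bigr) = f \cdot \spl\bigl(\psi_{f_+ \otimes f_-} g\bigr) = f \cdot \tot(g).
\]
Specialising to $g = 1$, the splitting condition $\tot(f^p) = f$ will reduce to the single identity $\tot(1) = 1$.

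Now $\tot(1) = \spl(\psi_{f_+ \otimes f_-})$ lies in $\ih 0 \lambda \cong \barnmd$, and it has weight $0$ because $\psi_{f_+ \otimes f_-}$ has weight $0$ and $\spl$ divides weights by $p$ by Proposition~\ref{pr:well-definedness & properties of spl}. Since the weight-$0$ subspace of $\barnmd$ is one-dimensional, spanned by the multiplicative unit, $\spl(\psi_{f_+ \otimes f_-}) = c \cdot 1$ for some scalar $c$, and evaluating at $1 \otimes 1 \otimes v_0$ shows $c = \psi_{f_+ \otimes f_-}(F_0 \mu_0 \otimes E_0 \otimes v_0)$ (using $\varphi(1) = \mu_0$ and the defining formula for $\spl$).

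The main remaining step, which I expect to be the most delicate, is to show this quantity equals $1$. Expanding $\Delta(F_0 \mu_0) = \Delta(F_0) \Delta(\mu_0)$ inside the explicit formula (\ref{eq:psi explicitly}), and using that each $(\mu_0)_{(i)} \in \bart$ acts on the weight vectors $f_\pm$ via the characters $c_{\pm \delta}$, the double Sweedler sum will factor as
\[
\left(\sum c_\delta\bigl((\mu_0)_{(1)}\bigr) \, c_{-\delta}\bigl((\mu_0)_{(2)}\bigr)\right) \cdot \left(\sum \eta\bigl((F_0)_{(1)}.f_+ \otimes E_0 \cdot (F_0)_{(2)}.f_-\bigr)\right).
\]
The first factor is $c_0(\mu_0) = 1$ by the Hopf-duality identity $(c_\delta \otimes c_{-\delta}) \circ \Delta = c_{\delta + (-\delta)} = c_0$ on $\bart$; the second factor is precisely $\psi_{f_+ \otimes f_-}(F_0 \otimes E_0 \otimes v_0) = 1$ by the normalization (\ref{eq:eta, f+, and f-}). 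This will give $\tot(1) = 1$, and the restriction to a Frobenius splitting of $\R\lambda$ is automatic since $\tot$ already preserves $\R\lambda$ by Proposition~\ref{pr:well-definedness & properties of spl}.
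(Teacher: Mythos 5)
Your argument is correct, and for the splitting condition it takes a genuinely tidier route than the paper. The paper verifies $\tot(e) = e$ by computing $\tot(e)(X \otimes Y)$ for all $X \in \barnm$, $Y \in \barn$: it uses the identity $\varphi X = \frpm(X)\mu_0$ together with $\mu_0.\mulf(e) = \mulf(e)$ (since $\mulf(e)$ has weight $0$) to eliminate $\mu_0$ up front, expands the product via Sweedler, collapses one factor using the formula for $e$, and then argues by weight considerations that only the $\epsilon(X)\epsilon(Y)$ terms survive, reducing to the normalization $\eta(F_0.f_+ \otimes E_0.f_-) = 1$. You instead observe that $\tot(e) = \spl(\psi_{f_+\otimes f_-})$ is a weight-$0$ element of $\ih{0}{\lambda} \cong \barnmd$, whose weight-$0$ subspace is one-dimensional and spanned by $e$; this reduces everything to a single scalar evaluation at $1 \otimes 1 \otimes v_0$. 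You then handle $\mu_0$ by factoring the Sweedler double sum and invoking the convolution identity $(c_\delta \otimes c_{-\delta})\circ\Delta|_{\bart} = c_0$ applied to $\mu_0$, yielding the factor $c_0(\mu_0) = 1$ via (\ref{eq:mu0 and weights}). This is more structural than the paper's ad hoc weight elimination, but the two approaches bottom out in the same normalization (\ref{eq:eta, f+, and f-}) and the same fact $\pi_{(p-1)N}(E_0) = E_0$ from Lemma \ref{lem:pi and E0}. One small point: your opening Frobenius-linearity computation implicitly requires that, when $g$ has degree $m$ with $p \nmid m$, the composite $\spl \circ \mulf$ vanishes on $g$ (so that it matches the piecewise definition $\tot|_{\ih{m}{\lambda}} = 0$); this does hold since $\mulf(g)$ then lands in degree $m + (p-1)N \not\equiv (p-1)N \pmod p$, outside the domain where $\spl$ is defined, but it is worth saying explicitly, as the paper does with its separate $p \nmid m$ case.
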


\begin{proof} Since $\tot$ preserves $\R \lambda$ it suffices to check that $\tot$ is a Frobenius splitting of $\Rh \lambda$. We first check that $\tot$ is Frobenius-linear. Choose $n \geq 0$ and $f \in \ih n \lambda$. For $ m $ with $ p \nmid m $ and $h \in \ih m \lambda$ we have $$ f^p h \in \ihp{ pn + m }\lambda \,. $$ Thus, since $p \nmid pn + m$, we have
\begin{equation}
\tot( f^p \cdot h ) = 0 = f \cdot \tot(h) \, .
\end{equation}

Now choose $m \geq 0$ and $g \in \ih {pm} \lambda$. Since $\mulf$ is given by section multiplication we have
\begin{equation*}
\mulf(f^p \cdot g) = f^p \cdot \mulf (g) \, .
\end{equation*} Thus, by Proposition \ref{pr:Frobenius-linearity of spl},
\begin{equation}
\tot(f^p \cdot g) = \spl \big(  f^p \cdot \mulf(g)  \big) = f \cdot \tot(g) \, .
\end{equation}
Hence $\tot$ is Frobenius-linear.

We next verify that $\tot$ is a Frobenius splitting. Since $\tot$ is Frobenius linear it suffices to show that $\tot(e) = e$, where $e \in \R \lambda$ is the unit. Now, $e \in \Indh{ \barndg 0 }$ is the element such that
\begin{equation} \label{eq:the identity in R}
e(X \otimes Y) = \epsilon(X) \epsilon(Y) 
\end{equation}
for all $X \in \barg$, $Y \in \barn$. Since $$ f( ZX \otimes Y ) = f( X \otimes \sigma Z*Y ) $$ for all $Z \in \barb$, $X \in \barg$, $Y \in \barn$, and $f \in \Indh{ \barnd }$, by the triangular decomposition of $\barg$ we can assume in the following that $X \in \barnm$. We have
\begin{align*}
\big( \tot(e) \big)(X \otimes Y) &= \big( ( \spl \circ \mulf )(e) \big)(X \otimes Y) \\
&= \big( \mulf(e) \big)( F_0 \cdot \varphi X \otimes E_0 \cdot \frp Y ) \\
&= \Big( \mu_0.\big( \mulf(e) \big) \Big)( F_0 \cdot \frpm X \otimes E_0 \cdot \frp Y ) \\
&= \big( \mulf(e) \big)( F_0 \cdot \frpm X \otimes E_0 \cdot \frp Y ) \\
& \quad \tr{(since $\mulf(e)$ has weight $0$)} \\
&= \sum \eta \Big(  (F_0)_{(1)}.( \frpm X )_{(1)}.f_+ \otimes \pi_{(p-1)N} \big( (E_0)_{(1)} \cdot ( \frp Y )_{(1)} \big).f_-  \Big) \cdot \\
& \quad e\Big( (F_0)_{(2)} \cdot ( \frpm X )_{(2)} \otimes (E_0)_{(2)} \cdot ( \frp Y )_{(2)} \Big) \\
& \quad \tr{(by (\ref{eq:multiplication in ih}) and (\ref{eq:psi f+ f- explicitly}))} \\ 
&= \eta \Big(  F_0. \frpm X.f_+ \otimes \pi_{(p-1)N} \big( E_0 \cdot \frp Y \big).f_-  \Big) \quad \tr{(by (\ref{eq:the identity in R}))} \\
&= \eta \big(  F_0.f_+ \otimes \pi_{(p-1)N} ( E_0 ).f_-  \big) \cdot \epsilon(X) \cdot \epsilon(Y) \quad \tr{(by weight considerations)} \\
&= \epsilon(X) \cdot \epsilon(Y) \quad \tr{(by (\ref{eq:eta, f+, and f-}))} \\
&= e( X \otimes Y ) \, .
\end{align*}
Hence $\tot$ is a Frobenius splitting of $\Rh \lambda$.
\end{proof}

\subsection{$\spl$ and the trace map} \label{sub:spl and the trace map}

In this section we compare $\spl$ to the local trace map. The results of this section are also crucial in the proof of Proposition \ref{pr:KLT} below. The main result in this section is Proposition \ref{pr:the trace map on Rh and tot}.

\begin{definition} \label{def:trace} For any polynomial ring $ P := \fp[z_1, \ldots, z_n] $ we have the Frobenius-linear \tb{trace map} $\trace : P \to P$ which is given on monomials as follows. Set $z_0 := z_1^{p-1} \cdots z_n^{p-1}$. Then
\begin{equation}
\trace( z_0 f^p ) = f
\end{equation}
for all $f \in P$, and if $g$ is a monomial that is not of the form $z_0 f^p$ for some $f \in P$ we set $\trace(g) = 0$. Up to a nonzero constant, $\trace$ is independent of the choice of generators $z_1, \ldots, z_n$ of $P$.
\end{definition}

\begin{remark} Consider the polynomial ring $P$ as above. For any $h \in P$ we have a Frobenius-linear endomorphism $f_h$ of $P$ given by
\begin{equation}
f_h(g) = \trace(hg) \tr{ for all } g \in P .
\end{equation}
By Example 1.3.1 in \cite{BK}, every Frobenius-linear endomorphism of $P$ is of the form $f_h$ for some $h \in P$.
\end{remark}

Let $ \{ x_\beta \}_{\beta \in \Delta^+} $ (resp. $ \{ y_\beta \}_{\beta \in \Delta^+} $) be eigenfunctions in degree 1 which generate $ \fp[\nm] $ (resp. $\fp[\n]$) as polynomial rings. By (\ref{eq:the 3 barb-module algebras}) we may also consider these as elements of $\barnmd$ (resp. $\barnd$). Set
\begin{equation}
y_0 := \displaystyle \prod_{\beta \in \Delta^+} y_\beta^{p-1} \, \tr{ and } \,
x_0 := \displaystyle \prod_{\beta \in \Delta^+} x_\beta^{p-1} \, .
\end{equation}
By the proof of Lemma \ref{lem:pi and E0}, after rescaling the $x_\beta$, $y_\beta$ if necessary we have that 
\begin{equation} \label{eq:x0, y0, F0, and E0}
x_0(F_0) = y_0(E_0) = 1 \, .  
\end{equation}
These choices of polynomial generators now give trace maps $\trp$ and $\trm$ on $\barnd$ and $\barnmd$ respectively as in Definition \ref{def:trace}.

In the case that $\lambda = 0$ we set $ \Rhz := \Rh \lambda $. In particular, identifying $\Rhz$ with the polynomial ring $\barnmd \otimes \barnd$, we obtain a trace map
\begin{equation} \label{eq:trm otimes trp}
\trm \otimes \trp : \Rhz \to \Rhz \,.
\end{equation}

\begin{subequations}
Define an endomorphism $ \splm $ of $ \barnmd $ by
\begin{equation}
(\splm f)(X) = f( F_0 \cdot \frpm X )
\end{equation}
for all $f \in \barnmd$ and $X \in \barnm$. Similarly, define an endomorphism $ \splp $ of $ \barnd $ by
\begin{equation}
(\splp g)(Y) = g( E_0 \cdot \frp Y )
\end{equation}
for all $g \in \barnm$ and $Y \in \barn$.
\end{subequations}

\begin{lemma} \label{lem:decomposition of spl}
$\spl = \splm \otimes \splp$ as endomorphisms of $\Rhz$.
\end{lemma}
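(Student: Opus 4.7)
The plan is to verify the identity on pure tensors $f_- \otimes f_+ \in \barnmd \otimes \barnd \cong \Rhz$, using that any element of $\Indh{\barnd}$ is determined by its values on $\barnm \otimes \barn$ via the $\barb$-linearity relation (\ref{eq:barb equivariance}) together with the PBW decomposition $\barg = \barb \cdot \barnm$. Concretely, for $X \in \barnm$ and $Y \in \barn$, which I may assume are weight vectors of weights $\mu$ and $\nu_Y$, I will compare the two sides evaluated on $X \otimes Y$.

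The right-hand side is immediate:
\begin{equation*}
((\splm \otimes \splp)(f_- \otimes f_+))(X \otimes Y) = f_-(F_0 \frpm X) \cdot f_+(E_0 \frp Y).
\end{equation*}
For the left-hand side, the formula $\varphi X = \frpm X \cdot \mu_0$ (valid when $X \in \barnm$) gives
\begin{equation*}
(\spl(f_- \otimes f_+))(X \otimes Y) = (f_- \otimes f_+)(F_0 \frpm X \cdot \mu_0 \otimes E_0 \frp Y).
\end{equation*}
Thus the whole content of the lemma is to show that inserting $\mu_0$ on the right of $F_0 \frpm X$ has no effect.

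To see this, I will commute $\mu_0$ past the weight vector $F_0 \frpm X \in \barnm$, which has $\bart$-weight $\nu := -2\delta + p\mu$. The elementary identity $Z H_i = (H_i - \nu(\alpha_i^\vee)) Z$ for weight-$\nu$ vectors $Z \in \barnm$, extended polynomially in $H_i$, yields $(F_0 \frpm X)\mu_0 = \mu_0^{(\nu)}(F_0 \frpm X)$ where $\mu_0^{(\nu)} := \prod_i \bigl(1 - (H_i - \nu(\alpha_i^\vee))^{p-1}\bigr) \in \bart$. Applying (\ref{eq:barb equivariance}) with $A = \mu_0^{(\nu)}$ moves $\sigma(\mu_0^{(\nu)})$ across the tensor to act (via $*$) on the weight-$(2\delta + p\nu_Y)$ vector $E_0 \frp Y$, contributing the scalar $c_{-(2\delta + p\nu_Y)}(\mu_0^{(\nu)})$.

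It remains to show this character value equals $1$. Fermat in $\F_p$ gives $c_\lambda(\mu_0^{(\nu)}) = \prod_i \bigl(1 - ((\lambda - \nu)(\alpha_i^\vee))^{p-1}\bigr)$, which equals $1$ iff $\lambda - \nu \in p\Lambda$. With $\lambda = -(2\delta + p\nu_Y)$ and $\nu = -2\delta + p\mu$, the $\pm 2\delta$ contributions cancel and $\lambda - \nu = -p(\mu + \nu_Y)$ always lies in $p\Lambda$. Hence the scalar is $1$ and the two evaluations coincide. The main bookkeeping is the commutation $(F_0 \frpm X)\mu_0 = \mu_0^{(\nu)}(F_0 \frpm X)$; conceptually the crux is that the $2\delta$-weights of $F_0$ and $E_0$ cancel in $\lambda - \nu$, which is precisely what makes the character identity hold uniformly in $X$ and $Y$.
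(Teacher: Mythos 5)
Your proof is correct, and it takes a route that is genuinely different from the paper's in its middle step. Both arguments reduce to pure tensors on $\barnm \otimes \barn$ and begin from the observation that $\varphi X = \frpm X \cdot \mu_0$ for $X \in \barnm$, so the lemma reduces to showing the extra $\mu_0$ is harmless. The paper then uses the $\barg$-module structure on $\Rhz$ via (\ref{eq:barg action}) to write $f(F_0\,\frpm X \cdot \mu_0 \otimes E_0\,\frp Y) = (\mu_0.f)(F_0\,\frpm X \otimes E_0\,\frp Y)$, takes $f$ to be a weight vector of weight $\mu$, and splits into cases via (\ref{eq:mu0 and weights}): if $\mu \in p\Lambda$ then $\mu_0.f = f$ and the identity is immediate, while if $\mu \notin p\Lambda$ then $\mu_0.f = 0$ \emph{and} $f(F_0\,\frpm X \otimes E_0\,\frp Y) = 0$ anyway because the argument sits in weight $p(\mu_X + \mu_Y) \in p\Lambda$. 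You instead commute $\mu_0$ to the other side of the weight vector $F_0\,\frpm X$ to get the shifted idempotent $\mu_0^{(\nu)}$, push $\sigma(\mu_0^{(\nu)})$ across the tensor via $\barb$-linearity (\ref{eq:barb equivariance}), and evaluate the resulting character directly, noting that the $\pm 2\delta$ weights of $F_0$ and $E_0$ cancel so the answer is uniformly $1$. The paper's argument is a touch slicker since it leans on the module structure and Lemma \ref{lem:varphi and weights} without any explicit commutation; yours avoids the case split entirely and makes the structural reason for the lemma --- the cancellation of the $\pm 2\delta$ shifts --- visible in a single computation.
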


\begin{proof}
Choose $X \in \barnm$, $Y \in \barn$, and $f \in \Rhz$. We need to show that
\begin{equation} \label{eq:decomposition of spl}
( \spl f )(X \otimes Y) = f( F_0 \cdot \frpm X \otimes E_0 \cdot \frp Y ) \,.
\end{equation}
Now,
\begin{align} \label{eq:spl does not need mu0}
( \spl f )( X \otimes Y ) &= f ( F_0 \cdot \varphi X \otimes E_0 \cdot \frp Y ) \\
\nonumber &= (\mu_0.f)( F_0 \cdot \frpm X \otimes E_0 \cdot \frp Y ) \,.
\end{align}
Without loss of generality we may assume that $f$ is a weight vector of weight $\mu \in \Lambda$ and that $X, Y$ are weight vectors of weight $ \mu_X $ and $\mu_Y$. Since $F_0 \cdot \frpm X \otimes E_0 \cdot \frp Y $ is a weight vector of weight $p(\mu_X + \mu_Y) \in p\Lambda$ we have
\begin{equation} \label{eq:spl does not need mu0 2}
f( F_0 \cdot \frpm X \otimes E_0 \cdot \frp Y  ) = 0 \tr{ unless } \mu = -p(\mu_X + \mu_Y) \in p\Lambda \,.
\end{equation}
In particular, if $\mu \notin p\Lambda$ then $\mu_0.f = 0$ and (\ref{eq:decomposition of spl}) follows from (\ref{eq:spl does not need mu0}) and (\ref{eq:spl does not need mu0 2}). On the other hand, if $\mu \in p\Lambda$ then $\mu_0.f = f$ and (\ref{eq:decomposition of spl}) follows from (\ref{eq:spl does not need mu0}).
\end{proof}

\begin{proposition} \label{pr:the trace map on Rh and tot} $ \splm = \trm $ and $ \splp = \trp $ as endomorphisms of $ \barnmd $ and $ \barnd $, respectively. In particular, $S = \trm \otimes \trp$ as Frobenius-linear endomorphisms of $R^\h$.
\end{proposition}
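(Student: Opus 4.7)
The plan is first to prove $\splp = \trp$ on $\barnd$; by a strictly symmetric argument $\splm = \trm$ on $\barnmd$, and the identity $\spl = \trm \otimes \trp$ on $\Rhz$ then follows immediately from Lemma \ref{lem:decomposition of spl}.

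My first step would be to verify that $\splp$ is itself Frobenius-linear, by a computation in the same spirit as Proposition \ref{pr:Frobenius-linearity of spl}. The key ingredients are: multiplicativity of $\Delta$ to expand $\Delta(E_0 \cdot \frp Y)$; the inclusion $E_0 \in \smalln$ combined with $\Delta(\smalln) \subseteq \smalln \otimes \smalln$, which via (\ref{eq:fr on smallg}) forces $\fr((E_0)_{(1)}) = \epsilon((E_0)_{(1)}) \in \fp$; the counit identity $\sum \epsilon((E_0)_{(1)}) (E_0)_{(2)} = E_0$ to collapse the resulting sum; and the commutative diagram (\ref{eq:commutative diagram for Frobenius-linearity 2}), which converts $\sum \fr((\frp Y)_{(1)}) \otimes (\frp Y)_{(2)}$ into $\sum Y_{(1)} \otimes \frp Y_{(2)}$. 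Putting the pieces together yields $\splp(f^p g) = f \cdot \splp(g)$.

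Once Frobenius-linearity is in hand, I would use that $\barnd \cong \fp[y_\beta : \beta \in \Delta^+]$ is a polynomial ring, hence free over its subring of $p$-th powers with basis the monomials $y^I = \prod y_\beta^{a_\beta}$ for $0 \leq a_\beta < p$; any Frobenius-linear endomorphism is thus determined by its values on these monomials. Since $\trp$ satisfies $\trp(y_0) = 1$ and $\trp(y^I) = 0$ for the remaining $y^I$ in this list, it suffices to establish matching values for $\splp$. Here a pure weight argument does the job. The element $y^I$ has weight $-\sum a_\beta \beta$, while $E_0 \cdot \frp Y$ has weight $2\delta + p \cdot \mr{wt}(Y)$, so $\splp(y^I)(Y) = y^I(E_0 \cdot \frp Y)$ vanishes unless $p \cdot \mr{wt}(Y) = \sum (a_\beta - (p-1)) \beta$. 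Because weights of $\barn$ lie in $\mb Z_{\geq 0} \Delta^+$ while each coefficient $a_\beta - (p-1) \leq 0$, both sides must be $0$. Pairing with $\rho^\vee$, which is strictly positive on every positive root, then forces $a_\beta = p-1$ for every $\beta$, and $Y$ into the weight-zero subspace $\fp \cdot 1 \subseteq \barn$.

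Thus $\splp(y^I) = 0$ whenever $I \neq (p-1, \ldots, p-1)$ with $0 \leq a_\beta < p$, while for $y^I = y_0$ the computation reduces to $\splp(y_0)(c \cdot 1) = c \cdot y_0(E_0) = c$ by the normalisation (\ref{eq:x0, y0, F0, and E0}), so $\splp(y_0) = \epsilon = 1 \in \barnd$. These values agree with $\trp$, yielding $\splp = \trp$. The parallel argument with $F_0, \frpm$ replacing $E_0, \frp$ (and negative roots replacing positive roots) delivers $\splm = \trm$, and Lemma \ref{lem:decomposition of spl} concludes. The main potential obstacle is the Frobenius-linearity bookkeeping, which is forced by the diagrams in Lemma \ref{lem:commutative diagram for Frobenius-linearity} but has to be tracked through with some care.
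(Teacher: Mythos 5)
Your proposal is correct and follows essentially the same route as the paper: reduce to $\splp = \trp$ by symmetry, observe both are Frobenius-linear and hence determined by their values on the monomials $\prod y_\beta^{a_\beta}$ with $0 \le a_\beta < p$, and match values via a weight computation using that $E_0$ has weight $2\delta$ and $y_0(E_0) = 1$. The one (modest) difference is that you spell out the Frobenius-linearity of $\splp$ via the counit trick and the diagram (\ref{eq:commutative diagram for Frobenius-linearity 2}), whereas the paper treats it as immediate — in fact it already follows from Proposition \ref{pr:Frobenius-linearity of spl} and Lemma \ref{lem:decomposition of spl} once one notes $\splm(x_0) = 1 \ne 0$, so your direct computation is a valid but redundant safeguard; your phrasing of the weight argument in terms of pairing with $\rho^\vee$ rather than the dominance order on $\mathbb{Z}_{\geq 0}\Delta^+$ is purely cosmetic.
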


\begin{proof}
We check that $\splp = \trp$; the fact that $\splm = \trm$ follows from a similar argument. Since $\splp$ and $\trp$ are Frobenius-linear endomorphisms, they are completely determined by their values on the monomials $ \displaystyle \prod_{\beta \in \Delta^+} y_\beta^{n_\beta} $ for $ 0 \leq n_\beta < p $, so it suffices to check that the values of $\splp$ and $\trp$ on those monomials are the same.

First consider a monomial $ y := \displaystyle \prod_{\beta \in \Delta^+} y_\beta^{n_\beta} $ where $0 \leq n_\beta < p$ for all $\beta \in \Delta^+$ and $n_\beta < p-1$ for some $\beta$. Then $ \trp(y) = 0 $ by definition. On the other hand, for all $X \in \barn$ we have
\begin{equation*}
\big( \splp (y) \big)(X) = y( E_0 \cdot \frp X ) \,.
\end{equation*}
We may assume that $X$ is a weight vector. Then $E_0 \cdot \frp X$ is a weight vector of weight $\geq (p-1) \rho$ and $y$ is a weight vector of weight $ \mu_y $ with $ -(p-1) \rho < \mu_y \leq 0 $. Hence $y( E_0 \cdot \frp X ) = 0$ so that $ \trp(y) = \splp(y) $.

Next, we have $ \trp(y_0) = 1 $ by definition. On the other hand, for all $X \in \barn$ we have
\begin{align*}
\big( \splp (y_0) \big)(X) &= y_0( E_0 \cdot \frp X ) \\
&= y_0(E_0) \cdot \epsilon(X) \quad \tr{(by weight considerations)} \\
&= \epsilon(X) \quad \tr{(by (\ref{eq:x0, y0, F0, and E0}))} \\
&= 1(X) \, .
\end{align*}
Thus $\splp = \trp$.
\end{proof}

\section{Base change to $k$ and main results} \label{sec:Frobenius}
Recall that $k = \overline \F_p$. We no longer assume that all schemes are over $\F_p$. Recall that $\Gk$, $\Bk$, $T_k$, etc are the groups obtained by base-changing $G$, $B$, $T$, etc to $k$. In this section we base change the above constructions to $k$ and prove that $\co = T^*(G_k/B_k)$ is Frobenius split. 

\subsection{Review of Frobenius splitting facts} \label{subsec:Frobenius splitting facts}
\quad

In this section we review the theory of Frobenius splitting. The main references are \cite{BK} and the seminal paper \cite{MR85}.

Let $X$ be a scheme over $k$. We define a morphism $F : X \to X$ as follows: let $F$ be the identity map on points and define $F^\# : \struct{X} \rightarrow F_* \, \struct{X}$ to be the $p^{\textrm{th}}$ power map $f \mapsto f^p$. Note that although $F$ is a morphism of $\F_p$-schemes, it is not a morphism of $k$-schemes. $F$ is called the \textbf{absolute Frobenius morphism}.

\begin{definition} We say that $X$ is \textbf{Frobenius split} if there is an $\struct{X}$-linear map $\varphi : F_* \struct{X} \rightarrow \struct{X}$ such that $\varphi \circ F^\#$ is the identity map on $\struct{X}$.
\end{definition}

For any invertible sheaf $\L$ on $X$ we set 
\begin{equation} \label{eq:the affine cone over L}
R_\L := \bigoplus_{n \geq 0} \cohom 0 X {\L^n} \, .
\end{equation}
Recall the definition of a Frobenius-split algebra from Definition \ref{def:Frobenius splittings of algebras}. The following fact from \cite{BK} is the starting point for algebraic Frobenius splitting.

\begin{proposition}[\cite{BK}, Lemma 1.1.14] \label{pr:algebraic version of Frobenius splitting} Let $\L$ be an ample invertible sheaf on a complete $k$-scheme $X$. Then $X$ is Frobenius split if and only if the $k$-algebra $R_\L$ is Frobenius split.
\end{proposition}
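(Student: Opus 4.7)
The plan is to prove the two implications separately; the common technical input is the projection formula for the absolute Frobenius $F$, namely the identification $F_* \L^{pn} \cong \L^n \otimes_{\struct X} F_* \struct X$ for all $n \geq 0$ (which uses that $F$ is affine and that $F^* \L \cong \L^p$). For the direction ``$X$ split $\Rightarrow R_\L$ split,'' I would start with a Frobenius splitting $\varphi : F_* \struct X \to \struct X$, tensor it with $\id_{\L^n}$ to produce $\struct X$-linear sheaf maps $\varphi_n : F_* \L^{pn} \to \L^n$, and take global sections to obtain $\fp$-linear maps $s_n : \cohom 0 X {\L^{pn}} \to \cohom 0 X {\L^n}$. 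Setting $s = s_n$ on $\cohom 0 X {\L^{pn}}$ and $s = 0$ on degrees not divisible by $p$ yields a candidate splitting of $R_\L$. The identity $s(f^p) = f$ for $f \in \cohom 0 X {\L^n}$ reduces, under the projection-formula isomorphism, to $\varphi \circ F^\# = \id$; Frobenius-linearity follows from the $\struct X$-linearity of the $\varphi_n$ combined with compatibility of the projection-formula isomorphisms under the multiplication $\L^m \otimes \L^n \to \L^{m+n}$.

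For the direction ``$R_\L$ split $\Rightarrow X$ split,'' I would first replace the given splitting $s$ by a graded one. For homogeneous $g$ of degree $n$, define $\tilde s(g)$ to be the degree-$n/p$ component of $s(g)$ when $p \mid n$ and $0$ otherwise, extended linearly. The splitting identity $s(f^p) = f$ for homogeneous $f$ of degree $n$ already forces the degree-$n$ component of $s(f^p)$ to equal $f$, so $\tilde s$ is still a splitting; and comparing graded components on either side of $s(f^p g) = f \cdot s(g)$ shows $\tilde s$ retains Frobenius-linearity. Hence I may assume $s(R_\L^{pn}) \subseteq R_\L^n$ and $s(R_\L^m) = 0$ for $p \nmid m$. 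Since $\L$ is ample, $X$ is covered by the affine opens $U_f := D_+(f)$ for $f \in R_\L$ homogeneous of some degree $d_f > 0$, and $\struct X(U_f) \cong (R_\L[f^{-1}])_0$. On each such open I would define
\begin{equation*}
\varphi_f \big( g / f^n \big) := s \big( f^{(p-1)n} \cdot g \big) \big/ f^n, \qquad g \in R_\L^{n d_f}.
\end{equation*}
Degree bookkeeping places $s(f^{(p-1)n} g)$ in $R_\L^{n d_f}$ so the right-hand side lies in degree zero. Well-definedness, Frobenius-linearity, and the splitting identity for $\varphi_f$ all transfer from the corresponding properties of $s$; agreement on overlaps $U_f \cap U_g = U_{fg}$ follows from the intrinsic form of the formula; and sheafifying produces $\varphi : F_* \struct X \to \struct X$.

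The main obstacle is the reverse direction, which involves two pieces of careful bookkeeping: the preliminary reduction to a graded splitting, and the well-definedness and gluing of the local maps $\varphi_f$. The well-definedness under $g/f^n = (f^a g)/f^{n+a}$ boils down to the Frobenius-linearity identity $s(f^{pa} \cdot h) = f^a \cdot s(h)$, so it is precisely the Frobenius-linear structure that makes the construction compatible with localization; once this is verified the gluing on double overlaps is formal.
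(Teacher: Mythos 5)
The paper states this result only as a citation to \cite{BK}, Lemma 1.1.14, without reproducing the proof, so there is no in-text argument to compare against; you are effectively reproving the cited lemma, and your argument is correct. The forward direction via the projection formula $F_*\L^{pn} \cong \L^n \otimes_{\struct X} F_*\struct X$ (using $F^*\L \cong \L^p$) is the standard way to turn a sheaf-level splitting $\varphi$ into a splitting of the section ring, and the compatibilities you invoke (with $\varphi \circ F^\# = \id$ and with the multiplications $\L^m \otimes \L^n \to \L^{m+n}$) do check out locally on a trivialization. For the reverse direction, the preliminary reduction to a graded splitting is the essential technical step, and your verification of it is sound: in characteristic $p$ one has $\big(\sum a_i\big)^p = \sum a_i^p$, so comparing graded components in $s(a^p) = a$ and $s(f^p g) = f \cdot s(g)$ does isolate the correct piece. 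With $s$ graded, the local formula $\varphi_f(g/f^n) = s(f^{(p-1)n}g)/f^n$ on $D_+(f)$ lands in degree zero, is well defined under $g/f^n = f^a g/f^{n+a}$ precisely because $s(f^{pa}h) = f^a s(h)$, agrees on overlaps $D_+(fg)$, and glues to a Frobenius splitting of $X$ via the identification $X \cong \tr{Proj}\, R_\L$ coming from ampleness of $\L$. This is essentially the same graded-ring and localization argument as in \cite{BK} (who phrase it via the affine cone $\tr{Spec}\, R_\L$), so there is no genuine divergence in approach.
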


\subsection{Splitting of $\co$} \label{subsec:splitting of co}

\subsubsection{Base change}
Set $\bargk := \barg \otimes_{\F_p} k$; we have similar definitions for $\barbk$, $\barbmk$, $\barnk$, $\barnmk$, and $\bartk$. Note that $\bargk$, $\barbk$, $\barbmk$, etc are the hyperalgebras of $G_k$, $B_k$, $B^-_k$, etc. For any $\fp$-module $M$ set $M_k := M \otimes_\fp k$. For $n \geq 0$ set
\begin{equation}
\barndgk n := \barndg n \otimes_\fp k \,,
\end{equation}
the degree-$n$ component of $\barnk$.

Note that if $ M $ is a $\barg$, $\barb$, etc module then $M_k$ is a $\bargk$, $\barbk$, etc module. For $\lambda \in \Lambda$ let $\chik \lambda$ denote the 1-dimensional $\barbk$-module corresponding to the weight $\lambda$ (equivalently, $\chik \lambda = \chi_\lambda \otimes_\fp k$).

For any $ \bartk $ (resp. $\bargk$) module $V$ we let, by a slight abuse of notation, $\finh V$ (resp. $\fing V$) denote the $\bartk$ (resp. $\bargk$) locally finite part of $V$, and we set $V^\vee := \finh V^*$.

For any $\barbk$-module $N$ set 
\begin{equation}
 \indgk {N} := \fing \, \Hom_\barbk( \bargk, N ) \, .
\end{equation}
Note that for any $\barb$-module $M$ we have a $\bargk$-module isomorphism
\begin{equation} \label{eq:indgk and indg otimes k}
\indgk{M_k} \cong \indg M \otimes_\fp k \, .
\end{equation}

\subsubsection{The splitting $\totk$ of $\co$}
Fix a regular dominant weight $\lambda \in \Lambda$ and set
\begin{equation}
R^k := \R \lambda \otimes_\fp k = \bigoplus_{n \geq 0} \Indgk{ \barndgk n \otimes \chik{-n\lambda}  } \,.
\end{equation}

For any $B_k$-module $M$ let $\L(M)$ denote the $G_k$-equivariant bundle on $G_k/B_k$ with fiber $M$. By Proposition 3.7 in \cite{APW} we have
\begin{equation} \label{eq:cohom and indg}
\Cohom 0 {G_k/B_k}{ \L(M) } \cong \Indgk{ M } .
\end{equation}

Let $\pco$ denote the projectivization of the bundle $\co$ and let $\L(\lambda)$ be the line bundle on $G_k / B_k$ corresponding to the $B_k$-module $\chik{-\lambda}$. Let
\begin{equation}
\mc Pr : \pco \to G_k / B_k 
\end{equation}
be the projection and set
\begin{equation}
\mc M := \mc Pr^* \L(\lambda) \otimes \strdiv \pco 1 \,.
\end{equation}
Recall the ring
\begin{equation}
R_{\mc M} = \bigoplus_{n \geq 0} \Cohom 0 \pco { \mc M^n }
\end{equation}
as in (\ref{eq:the affine cone over L}). By the projection formula and (\ref{eq:cohom and indg}) we have
\begin{equation} \label{eq:R and L lambda}
R_{ \mc M } \cong R^k \,.
\end{equation}
Also note that $\mc M$ is very ample on $\pco$ because it is the pullback of the very ample bundle $ \L(\lambda) \boxtimes \strdiv{ \P(\mf g) } 1 $ under the inclusion
\begin{equation}
\pco = G_k \times^{B_k} \P(\n) \, \hookrightarrow \, G_k \times^{B_k} \P( \mf g ) \, \cong \, \big( G_k / B_k \big) \times \P(\mf g) \,.
\end{equation}
By Lemma 1.1.11 in \cite{BK}, if $\pco$ is split then so is $\co$. Thus, to see that $\co$ is split, it suffices by Proposition \ref{pr:algebraic version of Frobenius splitting} and (\ref{eq:R and L lambda}) to show that $R^k$ is a Frobenius split algebra.

Let $\theta : k \to k$ be the $p^{th}$ power map and let $\theta' : k \to k$ be the $p^{th}$ root map. Set
\begin{equation}
\frtstark := \frtstar \otimes_\fp \theta: R^k \to R^k 
\end{equation}
and set
\begin{equation}
\totk := \tot \otimes_\fp \theta' : R^k \to R^k \,.
\end{equation}
Then, since $\frtstar$ is the $p^{th}$-power morphism on $\R \lambda$, $ \frtstark $ is the $p^{th}$-power morphism on $R^k$. Also, since $\tot$ is Frobenius-linear, so is $\totk$. Finally, it follows from Theorem \ref{th:tot is a splitting} that $ \totk \circ \frtstark = \id $. We summarize this discussion as follows.

\begin{theorem} $\totk$ is a Frobenius splitting of $R^k$. In particular, $\co$ is Frobenius split.
\end{theorem}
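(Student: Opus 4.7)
The plan is to verify directly the three defining properties of a Frobenius splitting for $\totk$, and then to chase through the geometric reductions already assembled in \S\ref{subsec:splitting of co} to obtain the corollary for $\co$. The whole argument is essentially a bookkeeping exercise in base change, since all the real work has been done in Theorem \ref{th:tot is a splitting}; the one thing to be careful about is how the twist by $\theta$ and $\theta'$ interacts with the $\fp$-linear Frobenius structure on $\R\lambda$.

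First I would check that $\frtstark$ is the $p$-th power map on $R^k$. By Proposition \ref{pr:frtstar is the pth power morphism}, $\frtstar$ is the $p$-th power on $\R\lambda$. Combined with the fact that $\theta:k\to k$ sends $c$ to $c^p$, on a pure tensor $a\otimes c \in \R\lambda \otimes_\fp k$ we get $\frtstark(a\otimes c) = a^p\otimes c^p = (a\otimes c)^p$ (where the last equality is the usual Frobenius identity in a characteristic-$p$ algebra, used to pass from $\fp$-tensors to actual multiplication). Next I would verify Frobenius-linearity of $\totk$. Since $\theta'$ is Frobenius-linear over $\fp$, meaning $\theta'(c^pd) = c\,\theta'(d)$ for $c,d\in k$, and since $\tot$ is Frobenius-linear by Theorem \ref{th:tot is a splitting}, on pure tensors we have
\begin{equation*}
\totk\bigl((a\otimes c)^p\,(b\otimes d)\bigr) = \tot(a^pb)\otimes\theta'(c^pd) = \bigl(a\cdot\tot(b)\bigr)\otimes\bigl(c\cdot\theta'(d)\bigr) = (a\otimes c)\cdot\totk(b\otimes d),
\end{equation*}
and this extends by $\fp$-linearity to all of $R^k$. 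The splitting identity $\totk\circ\frtstark=\id$ is then the pointwise computation
\begin{equation*}
\totk\bigl(\frtstark(a\otimes c)\bigr) = \tot(\frtstar a)\otimes\theta'(\theta c) = \tot(a^p)\otimes c = a\otimes c,
\end{equation*}
using Theorem \ref{th:tot is a splitting} in the middle step. This establishes that $\totk$ is a Frobenius splitting of $R^k$.

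For the second assertion, I would invoke the geometric set-up already in place: the isomorphism $R_{\mc M}\cong R^k$ of (\ref{eq:R and L lambda}) together with the very ampleness of $\mc M$ on $\pco$ lets Proposition \ref{pr:algebraic version of Frobenius splitting} transfer the splitting of $R^k$ into a Frobenius splitting of $\pco$. Finally, Lemma 1.1.11 of \cite{BK} propagates the splitting from the projective bundle $\pco$ down to $\co$ itself. The main ``obstacle,'' if any, is purely conceptual rather than technical: one must keep track of the fact that the base change by $\theta$ (multiplicative) versus $\theta'$ (Frobenius-linear) is precisely what converts a splitting defined over $\fp$ into a $k$-linear splitting of a $k$-algebra. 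Once this duality of twists is accepted, no genuine computation remains.
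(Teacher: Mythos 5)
Your proposal is correct and follows essentially the same route as the paper: base-change the $\fp$-level Frobenius splitting $\tot$ (Theorem \ref{th:tot is a splitting}) along $\theta'$ to get $\totk$, check that $\frtstark = \frtstar \otimes \theta$ is the $p$-th power map and that $\totk$ inherits Frobenius-linearity and the splitting identity, then feed this through Proposition \ref{pr:algebraic version of Frobenius splitting} and Lemma 1.1.11 of \cite{BK} via the isomorphism $R_{\mc M} \cong R^k$ and the very ampleness of $\mc M$. Your explicit pure-tensor computations are just the unwound form of what the paper states in a single sentence, so there is no substantive difference.
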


\subsubsection{Comparison with \cite{KLT}} \label{subsub:KLT}

Set $\stk := \st \otimes_{\fp} k$ and let $\eta_k : \ststk \to k$ be the duality pairing. In \cite{KLT} the authors construct, for any element $v \in \ststk$ such that $\eta_k(v) \neq 0$, a Frobenius splitting $f_v$ of $\co$. Their construction also requires them to fix a Springer isomorphism $U \isom \n$ so let us assume that the isomorphism used in their construction is the same one we fixed in \S \ref{subsub:Springer isom} above. In \S7 of \cite{KLT} they then construct, for any splitting $f_v$, a homogeneous splitting $ \pi_{(p-1)N} (f_v) $ of $\co$. (In this context "homogeneous" means that the splitting divides degrees by $p$).

Recall the highest and lowest weight elements $f_+$, $f_- \in \st$ as in \S\ref{sub:mul}. Set $f_+^k := f_+ \otimes 1 \in \stk$ and $f_-^k := f_- \otimes 1 \in \stk$.

\begin{proposition} \label{pr:KLT} The splitting of $\co$ induced by the splitting $\totk$ of $R^k$ is the same as the splitting $ \pi_{(p-1)N} (f_{f_+^k \otimes f_-^k}) $.
\end{proposition}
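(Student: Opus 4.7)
The plan is to identify both Frobenius splittings of $R^k$ (equivalently, of $\co$) as Frobenius-linear endomorphisms of the polynomial ring $k[F_k]$, where $F_k$ is the affine open preimage of the big cell, and then to use the fact (from Definition \ref{def:trace} and the remark following it) that every Frobenius-linear endomorphism of a polynomial ring is uniquely of the form $g \mapsto \trace(h \cdot g)$ for some $h$ in the ring. Showing equality of the two splittings then reduces to showing equality of two elements of $k[F_k]$.

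First I would exhibit $\totk$ in this trace form. The identification $\Rh\lambda \otimes_\fp k \cong k[F_k]$ from the introduction, together with the inclusion $R^k \hookrightarrow \Rh\lambda \otimes_\fp k$, realizes $R^k$ as a subring of $k[F_k]$. By Theorem \ref{th:tot is a splitting} we have $\tot = \spl \circ \mulf$, where $\mulf$ is multiplication by the section $\psi_{f_+ \otimes f_-}$; Proposition \ref{pr:the trace map on Rh and tot} identifies $\spl$ with the polynomial trace map $\trm \otimes \trp$ on $\Rhz$, and the same argument applies for arbitrary $\lambda$ since $\Rh\lambda$ is the same polynomial ring with a different $\barg$-action. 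Thus, under the identification with $k[F_k]$, the splitting $\totk$ is exactly the Frobenius-linear endomorphism $g \mapsto \trace(\psi_{f_+^k \otimes f_-^k} \cdot g)$.

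Next I would analyze the KLT splitting $\pi_{(p-1)N}(f_{f_+^k \otimes f_-^k})$ in the same framework. By construction in \cite{KLT}, the splitting $f_v$ associated to a vector $v \in \ststk$ with $\eta_k(v) \neq 0$ arises, on $F_k$, as multiplication by a section built from the Steinberg pairing $\eta_k$ and the $\bargk$-action on $\stk$, followed by the polynomial trace. Using the same Springer isomorphism fixed in \S\ref{subsub:Springer isom} and the normalization of $f_\pm$ imposed in (\ref{eq:eta, f+, and f-}), this section matches the image of $\bpsi$ defined in (\ref{eq:bpsi}). Moreover, the homogenization step $\pi_{(p-1)N}$ in \cite{KLT} picks out the degree $(p-1)N$ component, which is precisely the projection $q_{(p-1)N}$ appearing in the definition (\ref{eq:definition of psi}) of $\psi$. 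Hence the element of $k[F_k]$ governing the KLT splitting is again $\psi_{f_+^k \otimes f_-^k}$, so the two trace-form endomorphisms agree, and the two induced splittings of $\co$ coincide by Proposition \ref{pr:algebraic version of Frobenius splitting} (together with the fact that splittings of an irreducible scheme are determined by their restriction to a dense open).

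The main obstacle I expect is aligning conventions: matching the Springer isomorphism, the polynomial generators $\{x_\beta\}, \{y_\beta\}$ and hence the precise normalization of $\trace$, and the scalar in $\eta_k$ that is fixed by $\psi_{f_+^k \otimes f_-^k}(F_0 \otimes E_0) = 1$, against the corresponding conventions of \cite{KLT}. Since both operators are genuine Frobenius \emph{splittings} (rather than merely Frobenius-linear maps), the normalizations are pinned down by the requirement $s(1) = 1$, which gives a single scalar to check and thereby closes the comparison.
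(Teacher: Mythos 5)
Your proposal is essentially correct and follows the same route as the paper's proof: restrict to the big cell, realize both splittings as Frobenius-linear endomorphisms of the polynomial ring $k[F_k] \cong \barnmd \otimes \barnd \otimes k$, express them as trace maps $g \mapsto \trace(h \cdot g)$, and compare the two elements $h$. Your use of the uniqueness of the Frobenius-linear trace representation makes the final comparison cleaner, and you correctly identify the crucial inputs: Proposition \ref{pr:the trace map on Rh and tot} ($\spl = \trm \otimes \trp$), the definition $\psi = q_{(p-1)N} \circ \hpsi$, and the normalization $\psi_{f_+ \otimes f_-}(F_0 \otimes E_0) = 1$.

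Two points of imprecision worth flagging. First, the statement that the KLT homogenization $\pi_{(p-1)N}$ ``picks out the degree $(p-1)N$ component'' of the multiplier section is only correct after the interchange-of-projection-and-trace argument that the paper carries out explicitly: a priori the homogenized splitting on $\barnmd \otimes \barndg{pn}$ is $q_n \circ (\trm \otimes \trp) \circ (\cdot\, \hpsi_{f_+ \otimes f_-})$, and one must check that this equals $(\trm \otimes \trp) \circ q_{pn + (p-1)N} \circ (\cdot\, \hpsi_{f_+ \otimes f_-})$, i.e.\ that applying the trace and then truncating to degree $n$ is the same as truncating to degree $pn + (p-1)N$ and then applying the trace. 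This holds because $\trm \otimes \trp$ sends degree $pn + (p-1)N$ to degree $n$ and kills all other degrees congruent to those mod $p$, but the step needs to be said. Second, your final normalization remark — that ``$s(1)=1$ gives a single scalar to check'' — is not quite self-sufficient: $\trace(h) = 1$ does not determine $h$, so the splitting condition alone cannot close the comparison. What actually does the work is the degree-by-degree identification of the two multiplier sections as $\psi_{f_+ \otimes f_-}$, which you assert ``by construction'' but should track through the isomorphism $r_\lambda : \Rh\lambda \to \Rhz$ (the paper's commutative diagram~(\ref{eq:the isomorphism r lambda and tot})) and the normalizations~(\ref{eq:x0, y0, F0, and E0}) and~(\ref{eq:eta, f+, and f-}). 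With those two points made precise your argument is the paper's proof.
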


\begin{proof} Let $pr : \co \to G_k / B_k$ be the projection. Set $$F_k := pr^{-1}( U_k^- B_k ) \subseteq \co \,,$$ the fiber over the big cell. Then $ F_k \cong U_k^- \times \nk $. Set $\cofp := G \times^B \n$ and set $$F := U^- B \times \n \subseteq \cofp \,.$$ Then $\co = \cofp \times^\fp k$ and $F_k = F \times^\fp k$. It suffices to check that the two splittings coincide on the open set $F_k \subseteq \co$.

Denote by $\kltspl_k$ the restriction of the splitting $ \pi_{(p-1)N} (f_{f_+^k \otimes f_-^k}) $ to $F_k$. We now define a splitting $\kltspl$ of $ F $ such that $\kltspl_k$ is the base-change to $k$ (along with a twist by the $p^{th}$ root map $\theta'$) of $\kltspl$. Using our chosen Springer isomorphism we have
\begin{equation}
\fp[F] \cong \fp[U^-] \otimes \fp[U] \,.
\end{equation}
For each $m \geq 0$ let $\fp[U]_m$ denote the degree-$m$ component via the identification $\fp[U] \cong \fp[\n]$. Also recall from (\ref{eq:bpsi}) the definition of the morphism $\bpsi : \stst \to \barnd$. Using the identification $$ \Indg{ \barnd } \cong \Cohom 0 \cofp { \struct \cofp } \,, $$ we obtain a morphism
\begin{equation}
\hpsi := H^0( \bpsi ) : \stst \to \Cohom 0 \cofp { \struct \cofp } \,, \quad v \otimes w \mapsto \hpsi_{v \otimes w} \,.
\end{equation}
Now, following \cite{KLT}, $\kltspl$ is defined by the direct sum of the following compositions for $n \geq 0$:
\begin{equation} \label{eq:def of varphi}
\fp[U^-] \otimes \fp[U]_{pn} \stackrel{\cdot \, \hpsi_{f_+ \otimes f_-}} \longrightarrow \fp[U^-] \otimes \fp[U] \stackrel{ \trm \otimes \trp } \longrightarrow \fp[U^-] \otimes \fp[U] \stackrel{ q_n } \longrightarrow \fp[U^-] \otimes \fp[U]_n \,,
\end{equation}
where $ \cdot \, \hpsi _{f_+ \otimes f_-} $ denotes multiplication by the function $ \hpsi_{f_+ \otimes f_-} \in \Cohom 0 \cofp { \struct \cofp } $, $\trm \otimes \trp$ is the trace morphism as in (\ref{eq:trm otimes trp}), and $q_n$ is projection onto the $n^{th}$ homogeneous component $\fp[U^-] \otimes \fp[U]_n$. We also set
\begin{equation}
\kltspl\big(  \fp[U] \otimes \fp[U^-]_m  \big) = 0 \tr{ if } p \nmid m \,.
\end{equation}
It now suffices to verify that the splitting of $\fp[F]$ induced by $\tot$ is the same as $\kltspl$. 

Now, the splitting of $\fp[F]$ induced by the splitting $\tot$ of the ring $\Rh \lambda$ comes from the $\fp$-algebra isomorphism
\begin{equation} \label{eq:Rh lambda and fp[F]}
\Rh \lambda \cong \fp[F] 
\end{equation}
constructed as follows. First, recall that when $\lambda = 0$ we set $R^\h = \Rh \lambda$.  As in \S\ref{sub:spl and the trace map}, we have isomorphisms
\begin{equation} \label{eq:isoms for KLT}
\fp[F] \cong \fp[U^-] \otimes \fp[U] \cong \fp[\nm] \otimes \fp[\n] \cong \barnmd \otimes \barnd \cong R^\h \,.
\end{equation}
Note that for each $\lambda \in \Lambda$ there is a natural $\fp$-algebra isomorphism
\begin{subequations}
\begin{equation}
\bigoplus_{n \geq 0} \barndg n \otimes \chi_{-n\lambda} \cong \barnd
\end{equation}
which is \emph{not}, however, even $\bart$-equivariant. Thus, via the identification (\ref{eq:frtstar is the pth power morphism}), we get a natural $\fp$-algebra isomorphism
\begin{equation} \label{eq:the isomorphism r lambda}
r_\lambda :  \Rh \lambda = \bigoplus_{n \geq 0} \barnmd \otimes \barndg n \otimes \chi_{-n\lambda} \, \stackrel \sim \longrightarrow \, \bigoplus_{n \geq 0} \barnmd \otimes \barndg n = \Rhz
\end{equation}
given explicitly by
\begin{equation}
(r_\lambda f)( X \otimes Y ) = f( X \otimes Y \otimes v_{n\lambda} )
\end{equation}
\end{subequations}
for all $n \geq 0$, $X \in \barnm$, and $Y \in \barng n$. (As above, though, this is not $\bart$-equivariant). Combining (\ref{eq:isoms for KLT}) and (\ref{eq:the isomorphism r lambda}) we get the desired isomorphism (\ref{eq:Rh lambda and fp[F]}).

Now, it is easy to see that the following diagram commutes for all $\lambda$:
\begin{equation} \label{eq:the isomorphism r lambda and tot}
\xymatrix{ \Rh \lambda \ar[r]^{r_\lambda} \ar[d]_\tot & R^\h \ar[d]^\tot \\
\Rh \lambda \ar[r]_{r_\lambda} & R^\h \,.
}
\end{equation}
Also, by (\ref{eq:isoms for KLT}) we can consider $\kltspl$ as a splitting of $R^\h$. Hence it suffices to check that $\kltspl$ and $\tot$ are equal, considered as splittings of $R^\h$.

First, we have that $ \kltspl $ and $\tot$ are both zero on homogeneous elements of $R^\h$ of degree $m \nmid p$. Next, considering $ \trm \otimes \trp $ as an endomorphism of $R^\h$ as in \S\ref{sub:spl and the trace map}, by (\ref{eq:def of varphi}) we have that $\kltspl$ is given on the $pn^{th}$ homogeneous component of $R^\h$ by the following composition:
\begin{subequations}
\begin{equation}
\barnmd \otimes \barndg{pn} \stackrel{ \cdot \, \hpsi_{f_+ \otimes f_-} } \longrightarrow R^\h \stackrel{ \trm \otimes \trp } \longrightarrow R^\h\stackrel{ q_n } \longrightarrow \barnmd \otimes \barndg n \,.
\end{equation}
Here we denote, as above, the projection onto the $n^{th}$ homogeneous component of $R^\h$ by $q_n$. Since $ \trm \otimes \trp $ sends elements of degree $pn + (p-1)N$ to elements of degree $n$, this is the same as the composition
\begin{equation} \label{eq:KLT 1}
\barnmd \otimes \barndg{pn} \stackrel{ \cdot \, \hpsi_{f_+ \otimes f_-} } \longrightarrow R^\h \stackrel{ q_{pn + (p-1)N} } \longrightarrow R^\h\stackrel{ \trm \otimes \trp } \longrightarrow \barnmd \otimes \barndg n \,.
\end{equation}
\end{subequations}
On the other hand, recall that $\tot$ is given on the $pn^{th}$ homogeneous component of $R^\h$ by
\begin{equation} \label{eq:KLT 2}
\barnmd \otimes \barndg{pn} \stackrel{ \mulf } \longrightarrow R^\h \stackrel{ \spl } \longrightarrow \barnmd \otimes \barndg n \,.
\end{equation}

Now, by the definition of $ \psi $ in (\ref{eq:definition of psi}), we have that $\psi = q_{(p-1)N} \circ \hpsi$. Hence for all $ f \in \barnmd \otimes \barndg{pn} $ we have
\begin{align}
\mulf(f) &= f \cdot \psi_{f_+ \otimes f_-} \\
\nonumber &= f \cdot \Big( q_{(p-1)N} \big(  \hpsi_{f_+ \otimes f_-} \big) \Big) \\
\nonumber &= q_{pn + (p-1)N}\big(  f \cdot \hpsi_{f_+ \otimes f_-}  \big) \,.
\end{align}
Also, by Proposition \ref{pr:the trace map on Rh and tot}, $\spl = \trm \otimes \trp$. Thus (\ref{eq:KLT 1}) and (\ref{eq:KLT 2}) are the same morphism, so we have that $\kltspl$ and $\tot$ give the same splitting of $ R^\h $ as desired.
\end{proof}

\begin{remarks}
\quad
\begin{enumerate}
	\item Although the rings $\R \lambda$ are nonisomorphic for various choices of $\lambda$, the splitting of $\co$ induced by $\totk$ does not depend on the choice of regular dominant $\lambda \in \Lambda$. Indeed, $\totk$ restricts to the same splitting (\ref{eq:KLT 2}) of the open set $ F_k \subseteq \co $ regardless of the choice of $\lambda$.
	\item For a parabolic subalgebra $\mf p \supseteq \mf b$ let $\n_{\mf p}$ denote its nilradical. In \cite{MV} and \cite{vdK} it is shown that in type $A$ the splitting $ \pi_{(p-1)N} (f_{f_+^k \otimes f_-^k}) $ compatibly splits the subbundles $G_k \times^{B_k} (\n_{\mf p})_k$ for every parabolic subalgebra $\mf p \supseteq \b$. A main hope of algebraic Frobenius splitting is to extend this result to other types.
	\item Since the splitting $ \pi_{(p-1)N} (f_{f_+ \otimes f_-}) $ is $B$-canonical we have that the splitting $\totk$ is also $B$-canonical. In the algebraic context $B$-canonicity is equivalent to the fact that
\begin{equation}
\tot( \varphi Z.f ) = Z. \big( \tot f \big) \tr{ for all } f \in \R \lambda \tr{ and } Z \in \barb \,.
\end{equation}
However, I do not know how to show this directly.
	\item By Proposition 4.1.17 in \cite{BK}, if $\n$ were $B$-canonically split then one would immediately obtain a $B$-canonical splitting of $\co$ as well. Since $\co$ is $B$-canonically split, it is tempting to try to use algebraic techniques to construct a $B$-canonical splitting of $\n$. However, by the following argument due to Kumar, it is known that $\n$ is \emph{not} $B$-canonically split.
	
	Indeed, if $\n$ were $B$-canonically split, then by Exercise 4.1.E(4) in \cite{BK} $\co$ would be split compatibly with the divisor $ D := (p-1) \pi^* \partial(\flag)  \, .$ Here, $ \pi : \co \to \flag $ is the projection and $ \partial(\flag) \subseteq \flag $ is the divisor $ \bigcup_{ i = 1 }^\ell X_{w_0 s_i} $, where the $s_i \in W$ are the simple reflections, $w_0$ is the longest element of the Weyl group, and for any element $w$ of the Weyl group, $X_w := \overline{BwB} \subseteq \flag$ is the associated Schubert variety. Now, $$ \strdiv \flag D \cong \pi^* \LL( (p-1) \rho ) \,,$$ so by Lemma 1.4.7(i) of \cite{BK} we would have the following consequence: If $\lambda \in \Lambda$ is such that $\pi^* \LL(  p \lambda + (p-1) \rho  )$ has higher cohomology vanishing on $ \T^* $ then so does $ \pi^* \LL( \lambda ) $. By base change this would also be true in characteristic 0; but this is known to be false (cf \cite{Bro94}).
	\item Replacing the $*$-action of $\barb$ on $\barn$ by the multiplication action, one can construct an algebraic splitting of the affine variety $G_k/T_k \cong G_k \times^{B_k} U_k$. Note that here one does not need to use a Springer isomorphism.
\end{enumerate}
\end{remarks}

\newpage

\bibliographystyle{amsplain}

\bibliography{/users/charleshague/library/texshop/templates/bibs/thebibliography}

\providecommand{\bysame}{\leavevmode\hbox to3em{\hrulefill}\thinspace}
\providecommand{\MR}{\relax\ifhmode\unskip\space\fi MR }
\providecommand{\MRhref}[2]{%
  \href{http://www.ams.org/mathscinet-getitem?mr=#1}{#2}
}
\providecommand{\href}[2]{#2}
\begin{thebibliography}{10}

\bibitem{APW}
Henning~Haahr Andersen, Patrick Polo, and Ke~Xin Wen, \emph{Representations of
  quantum algebras}, Invent. Math. \textbf{104} (1991), no.~1, 1--59.

\bibitem{BK}
M.~Brion and S.~Kumar, \emph{Frobenius splitting methods in geometry and
  representation theory}, Progress in Mathematics, no. 231, Birkh\"auser
  Boston, 2005.

\bibitem{Bro94}
B.~Broer, \emph{Normality of some nilpotent varieties and cohomology of line
  bundles on the cotangent bundle of the flag variety}, Lie Theory and
  Geometry, Prog. Math., Birkh\"auser, 1994, pp.~1--19.

\bibitem{FP}
Eric~M. Friedlander and Brian~J. Parshall, \emph{Rational actions associated to
  the adjoint representation}, Ann. Sci. \'Ecole Norm. Sup. (4) \textbf{20}
  (1987), no.~2, 215--226.

\bibitem{Hab80}
W.~J. Haboush, \emph{Central differential operators on split semisimple groups
  over fields of positive characteristic}, S\'eminaire d'{A}lg\`ebre {P}aul
  {D}ubreil et {M}arie-{P}aule {M}alliavin, 32\`eme ann\'ee ({P}aris, 1979),
  Lecture Notes in Math., vol. 795, Springer, Berlin, 1980, pp.~35--85.

\bibitem{Ja03}
Jens~Carsten Jantzen, \emph{Representations of algebraic groups}, Mathematicals
  Surveys and Monographs, no. 107, Amer. Math. Soc., 2003.

\bibitem{GK10}
Masaharu Kaneda and Michel Gros, \emph{Contraction par {F}robenius de
  {G}-modules}, arXiv:1004.1939, 2010.

\bibitem{KLT}
Shrawan Kumar, Niels Lauritzen, and Jesper~Funch Thomsen, \emph{Frobenius
  splitting of cotangent bundles of flag varieties}, Invent. Math. \textbf{136}
  (1999), no.~3, 603--621.

\bibitem{KL00}
Shrawan Kumar and Peter Littelmann, \emph{{F}robenius splitting in
  characteristic zero and the quantum {F}robenius map}, J. Pure. Appl. Algebra
  (2000), no.~152.

\bibitem{KL02}
\bysame, \emph{Algebraization of {F}robenius splitting via quantum groups},
  Annals of Mathematics (2002), no.~155, 491--551.

\bibitem{L90}
George Lusztig, \emph{Quantum groups at roots of 1}, Geom. Dedicata \textbf{35}
  (1990), 89--113.

\bibitem{MR85}
V.~B. Mehta and A.~Ramanathan, \emph{Frobenius splitting and cohomology
  vanishing for {S}chubert varieties}, Ann. of Math. (2) \textbf{122} (1985),
  no.~1, 27--40.

\bibitem{MV}
V.~B. Mehta and Wilberd van~der Kallen, \emph{A simultaneous frobenius
  splitting for closures of conjugacy classes of nilpotent matrices},
  Compositio Mathematica \textbf{84} (1992), 211 -- 221.

\bibitem{SpUnip}
T.~A. Springer, \emph{The unipotent variety of a semi-simple group}, Algebraic
  {G}eometry ({I}nternat. {C}olloq., {T}ata {I}nst. {F}und. {R}es., {B}ombay,
  1968), Oxford Univ. Press, London, 1969, pp.~373--391.

\bibitem{vdK}
Wilberd van~der Kallen, \emph{Addendum to: A simultaneous {F}robenius splitting
  for closures of conjugacy classes of nilpotent matrices}, arXiv:0803.2960v2.

\end{thebibliography}

\end{document}